\documentclass[english,10pt, a4paper]{amsart}

\usepackage{amsmath}
\usepackage{amssymb}
\usepackage{times}
\usepackage{mathrsfs}
\usepackage{graphicx}

\linespread{1.3}

\setlength{\textheight}{22cm} \textwidth14cm
\hoffset=-0.5truecm %se metti -0.5 si sposta il margine vs sx
\begin{document}
\numberwithin{equation}{section}

\def\1#1{\overline{#1}}
\def\2#1{\widetilde{#1}}
\def\3#1{\widehat{#1}}
\def\4#1{\mathbb{#1}}
\def\5#1{\frak{#1}}
\def\6#1{{\mathcal{#1}}}

\newcommand{\bC}{{\mathbb{C}}}
\newcommand{\bP}{{\mathbb{P}}}
\newcommand{\bF}{{\mathbb{F}}}
\newcommand{\bZ}{{\mathbb{Z}}}
\newcommand{\bR}{{\mathbb{R}}}
\newcommand{\si}{{\sigma}}
\newcommand{\D}{{\delta}}
\newcommand{\al}{{\alpha}}
\newcommand{\la}{{\lambda}}
\newcommand{\La}{{\Lambda}}
\newcommand{\G}{{\Gamma}}
\newcommand{\ga}{{\gamma}}
\newcommand{\W}{{W_d^r(Z)}}
\newcommand{\tW}{{\widetilde{W}}}
\newcommand{\tX}{{\widetilde{X}}}
\newcommand{\tp}{{\widetilde{p}}}
\newcommand{\tq}{{\widetilde{q}}}
\newcommand{\tS}{{\widetilde{\Si}}}
\newcommand{\f}{{\varphi}}
\newcommand{\Oc}{{\mathcal{O}}}

\newcommand{\s}{{\mathcal{S}}}
\newcommand{\pP}{{\mathcal{P}}}
\newcommand{\M}{{\mathcal{M}}}
\newcommand{\g}{{\mathfrak{g}}}
\newcommand{\X}{{\mathcal{X}}}
\newcommand{\T}{{\mathcal{T}}}
\newcommand{\Li}{{\mathcal{L}}}
\newcommand{\grad}{{\underline{\rm de}{\rm g}}}
\newcommand{\de}{{\underline{d}}}
\newcommand{\cu}{{C_1}}
\newcommand{\cd}{{C_2}}
\newcommand{\Pic}{{{\rm Pic}}}
\newcommand{\Pbar}{{\overline{P^d_X}}}
\newcommand{\IM}{{{\rm Im}}}
\newcommand{\ov}{\overline}
\newcommand{\un}{\underline}
\newcommand{\wt}{\widetilde}
\newcommand{\wh}{\widehat}

\newtheorem{teo}{Theorem}[subsection]
\newtheorem{lemma}[teo]{Lemma}
\newtheorem{defi}[teo]{Definition}
\newtheorem{prop}[teo]{Proposition}
\newtheorem{cor}[teo]{Corollary}
\newtheorem{pb}[teo]{Problem}
\newtheorem{parte}[]{Part}

\newtheorem{exer}{Exercise}[section]
\newtheorem{example}{Example}[section]
\newtheorem{remark}[teo]{Remark}

\title{Compactifying the image of the Abel map}

\author{Silvia Brannetti}
\address{Dipartimento di Matematica,
Universit\`a Roma Tre,
Largo S. Leonardo Murialdo 1,
00146 Roma (Italy)}
\email{brannett@mat.uniroma3.it}

\begin{abstract}
Let $\al_X^{\de}$ be the Abel map of multidegree $\de$ of a singular curve $X$ of genus $g$. We describe the closure of $\IM\al_X^\de$ inside Caporaso's compactified Jacobian $\ov{P_X^d}$ for irreducible curves, curves of compact type and binary curves.
\end{abstract}

\maketitle

%\tableofcontents

\section{Introduction}

Abel maps have been studied since the nineteenth century, starting from smooth curves:
given a smooth projective curve $C$ of genus $g$ and a natural number $d\geq 1$, we can consider the product $C^d$ and define the \textit{Abel map of degree $d$}
$$\begin{array}{cccc}
\al_C^d:& C^d&\longrightarrow& \Pic^dC\\
&(p_1,\ldots,p_d)&\mapsto &\Oc_C(\sum\limits_{i=1}^{d}p_i);
\end{array}$$
it is a regular map, and in degree $1$ it is injective when $g\geq 1$. In particular, when $d=1$ it gives the Abel-Jacobi embedding of $C$ into $\Pic^1 C\cong J(C)$, its Jacobian. In the smooth case the image $\IM\al_C^d$ of the Abel map coincides with a Brill-Noether variety, $W_d(C)$, defined as

$$W_d(C)=\{L\in{\rm Pic}^d C\mbox{ s.t. }h^0(C,L)> 0\}.$$

We recall that $\dim W_d(C)=\min\{d,g\}$. A natural problem is to extend the Abel maps to singular curves in such a way that they have a geometric meaning.

If $X$ is a singular curve we can still define Abel maps, but if we proceed as in the smooth case we will get a rational map, since the singular points are not Cartier divisors. We consider
the decomposition of $X$ in irreducible components, $X=C_1\cup\ldots\cup C_\ga$, and set $\dot X:=X\setminus X^{\rm sing}$, where $X^{\rm sing}$ is the set of nodes of $X$, and $\dot C_i=C_i\cap\dot X$. Then, let $\de=(d_1,\ldots,d_\ga)$ be a multidegree with $d_i\geq 0$ for any $i$, and $$\dot X^\de:=\dot C_1^{d_1}\times\cdots\times\dot C_\ga^{d_\ga};$$
we notice that $\dot X^\de$ is a smooth irreducible variety of dimension $d=|\de|$, open and dense in $X^\de:=C_1^{d_1}\times\cdots\times C_\ga^{d_\ga}$. We set
$$\begin{array}{cccc}
\al_X^\de:& \dot X^\de&\longrightarrow& \Pic^\de X\\
&(p_1,\ldots,p_d)&\mapsto &\Oc_X(\sum\limits_{i=1}^{d}p_i),
\end{array}$$
and we call it the \textit{Abel map of multidegree $\de$}; it is a regular map.
Abel maps for integral curves have been studied by Altman and Kleiman in \cite{bib:AltKl}, and later on in \cite{bib:EGK1}, \cite{bib:EGK2}, \cite{bib:EK}. We notice that the completion of Abel maps for integral curves was a major step to prove \textit{autoduality} of the compactified Jacobian (\cite{bib:EGK2}). This is an important property connected with the study of the fibers of the Hitchin fibration for $GL(n)$ (\cite{AKI}, \cite{Ngo}).

For reducible curves, the problem of completing the Abel maps is open with a few exceptions as we shall explain. As it is well known, the non separatedness of the Picard functor, together with combinatorial hurdles, make the case of reducible curves much more complex.
The first step in this direction was taken by Caporaso and Esteves in \cite{bib:CapEst}, where they construct Abel maps of degree $1$ for stable curves. For the completion of the map they use as target space the compactified Picard scheme $\ov{P_X^1}$ constructed in \cite{bib:compJac}; they prove that there exists a regular map
$$\ov{\al_X^1}:X\longrightarrow \ov{P_X^1}$$
which extends $\al_X^1$. However, they do not describe explicitly the closure of the image of the completion of the map. It is interesting to notice that they consider stable curves as limits of smooth ones, approaching this way the study of Abel maps for families of curves. In this setting, the completion of $\al_X^1$ can be viewed as a specialization to the singular fiber of the Abel maps of the smooth fibers. It is important to point out that the map $\ov{\al_X^1}$ turns out to be independent of the smoothing of $X$. This last aspect has been studied in \cite{bib:naturality}, where the author characterizes in purely combinatorial terms the stable curves having \textit{natural} Abel maps: again looking at a stable curve $X$ as limit of smooth ones, an Abel map for $X$ is natural if it doesn't depend on the choice of the smoothing.

Further improvements have been achieved for Gorenstein curves by Caporaso, Coelho and Esteves in \cite{bib:gorenstein} using torsion free sheaves, and by Coelho and Pacini in \cite{bib:CoelTesi} and \cite{bib:CoelPac}, where, respectively, they construct Abel maps of degree $2$ for curves with two components and two nodes, and in any degree for curves of compact type. So in all other cases this problem remains open.

On the other hand the situation is better understood in case $d=g-1$ in \cite{bib:theta}: if $X$ is a nodal connected curve of genus $g$, denote by $A_\de(X)$  the closure of $\IM\al_X^\de$ inside $\Pic^\de X$. Let
$$W_\de(X):=\{L\in\Pic^\de X:h^0(L)>0\};$$
in Theorem 3.1.2. the author proves that if $\de$ is a stable multidegree such that $|\de|=g-1$, then $$A_\de(X)=W_\de(X),$$ and hence that the Brill-Noether variety $W_\de(X)$ is irreducible. Let $\overline{P^{g-1}_X}$ be the compactified Jacobian in degree $g-1$; it has a polarization given by the Theta divisor $\Theta(X)$, and the pair $(\overline{P^{g-1}_X},\Theta(X))$ is a semiabelic stable pair as in \cite{bib:alexeev1}. It turns out that the varieties $A_\de(X_S)=W_\de(X_S)$, where $X_S$ is a partial normalization of $X$ at a set $S$ of nodes, are the sets which give a stratification of $\Theta(X)$ (see Theorem 4.2.6. in \cite{bib:theta}).

The goal of this paper is to generalize this stratification in lower degree and give a characterization of the closure of the image of the Abel map of multidegree $\de$ for some classes of nodal curves, inside the compactified Picard variety $\Pbar$ constructed in \cite{bib:compJac}. We recall that in this construction every point of $\Pbar$ corresponds to a pair $(\wh X_S,\wh M_S)$ where $\wh X_S$ is the blow up of $X$ at a set $S$ of nodes of $X$, and $\wh M_S$ is a \textit{balanced} line bundle (see below) of multidegree $\de$ on $\wh X_S$ up to equivalence. So our question can be posed in the following way: which points of $\Pbar$ are limits of effective Weil divisors on $X$?

We will study the following cases: irreducible curves on the one hand, and two types of reducible curves, namely curves of compact type and binary curves. Curves of compact type have the advantage and the special property that the generalized Jacobian is compact. Binary curves are nodal curves made of two smooth rational components meeting at $g+1$ points. They form a remarkable class of reducible curves since they present the basic problems as all reducible curves, yet simpler combinatorics. Indeed, they have been used in the past as test cases for results later generalized to all stable curves, see for instance \cite{bib:binary},\cite{bib:Bruno}.

In order to answer our question, let $X_S$ be a partial normalization of a nodal curve $X$ at a set $S$ of nodes. We define the set

$$W_{\de_S}^{+}(X_S)=\{L\in\Pic^{\de_S} X_S:h^0(Z,L|_Z)>0\mbox{ for all subcurves }Z\subseteq X_S\},$$
and consider the union of
the $W_{\de_S}^{+}(X_S)$ when $S$ varies among the subsets of $X^{\rm sing}$ and $\de_S$ is the restriction to $X_S$ of a balanced multidegree ${\widehat \de}_S$ on the partial blow up $\wh X_S$. Similarly to \cite{bib:theta}[Theorem 4.2.6], we define
$$\wt W_d(X):=\bigsqcup\limits_{\emptyset\subset S\subset X^{\rm sing} \atop \wh \de_S\in B^{^{\geq0}}_{d}(\wh X_S)}W_{\de_S}^{+}(X_S),$$
where $B^{^{\geq0}}_{d}(\wh X_S)$ is the set of strictly balanced multidegrees $\widehat \de_S\geq\un 0$ on $\wh X_S$ such that $|\widehat \de_S|=d$, and $\de_S={\widehat \de}_S{|_{X_S}}$.

In section \ref{irrcurves} we study directly the closure inside $\Pbar$ of $A_d(X)$, and we prove that $\overline{A_d(X)}=\overline{W_d(X)}$ giving a description of it in terms of the Brill-Noether varieties $W^0_{d-\D_S}(X_S)$ where $X_S$ is the normalization of $X$ at a set of nodes $S$, and $\delta_S=\sharp S$.

In section \ref{redcurves} we turn our attention to reducible curves: we describe the structure of the varieties $W_\de(X)$ for curves of compact type, which is quite natural, and in the last part we develop the study of $A_\de(X)$ and its closure inside $\Pbar$ for binary curves. We characterize it in terms of the varieties $W_{\de_S}(X_S)$.
If $X$ is a binary curve of genus $g$ and $1\leq d\leq g-1$, we prove that the closure inside $\Pbar$ of the union of the varieties $A_\de(X)$ as $\de$ varies among balanced multidegrees on $X$, is exactly $\wt W_d(X)$. In other words, we define
$$\ov{A_d(X)}:=\ov{\bigcup_{\de\in B^{^{\geq0}}_d(X)}A_\de(X)}\subset \Pbar,$$

then the main theorem states that

\begin{equation}
\wt W_d(X)=\ov{A_d(X)}\subset\Pbar.
\end{equation}
Finally we study the simpler case when $d=1$ giving a characterization of the closure of the image of the Abel map for all the stable curves such that the set $B^{\geq 0}_1(X)$ of strictly balanced multidegrees $\de\geq\un 0$ is nonempty, i.e. the so called \textit{$d$-general} curves.

\section*{Acknowledgements}
It is a pleasure to thank my Ph.D. advisor Lucia Caporaso for introducing me this problem and for her guidance and suggestions. I also want to thank Eduardo Esteves for useful remarks and Edoardo Ballico and Claudio Fontanari for encouraging discussions.

\section{Notation}
Let us recall some basic facts about the construction in \cite{bib:compJac} that we will use in what follows. We work over an algebraically closed field $k$. Throughout the paper a \textit{curve} will be a reduced projective variety of pure dimension $1$ over $k$. Moreover, we will deal with nodal curves, although some statements are more general. Let then $X$ be a nodal curve, and let $X^\nu\stackrel{\nu}{\longrightarrow}X$ be its normalization; if $X^\nu=\sqcup_{i=1}^\gamma C^\nu_i$ is the decomposition of $X^\nu$ into smooth components of genus $g_i$ for every $i=1,\ldots,\ga$, then the arithmetic genus of $X$ is $g=\sum_{i=1}^\ga g_i+\delta-\ga+1$. If $Z$ is a subcurve of $X$ of genus $g_Z$ and $Z^c=\ov{X\setminus Z}$, we will denote by $\delta_Z=\sharp Z\cap Z^c$ and if $\omega_X$ is the dualizing sheaf of $X$, we set $\deg_Z \omega_X=\deg\omega_X|_Z=2g_Z-2+\delta_Z$.

A curve $X$ of genus $g\geq 2$ is said to be \textit{stable} if it is connected and if every component $E\cong\bP^1$ is such that $\delta_E\geq 3$, which is equivalent to saying that the curve has finite automorphism group. By a \textit{quasistable} curve we mean a connected curve $X$ such that every subcurve $E\cong\bP^1$ has $\delta_E\geq 2$ and the ones with $\delta_E=2$, i.e. the exceptional components, don't intersect. If $S$ is a set of nodes of a stable curve $X$, throughout the paper we will denote by $X_S$ the normalization of $X$ at the nodes in $S$, and by $\wh X_S$ the quasistable curve obtained by ``blowing up'' $X$ at $S$. In what follows we will often call $\wh X_S$ a partial blow up of $X$. Obviously $X_S$ is the complement in $\wh X_S$ of all the exceptional components.

In \cite{bib:compJac} Caporaso constructs a compactification $\ov{ P}_{d,g}\rightarrow\ov{M}_g$ of the universal Picard variety, such that the fiber over a smooth curve $X$ of genus $g\geq 2$ is its Picard variety $\Pic^d X$, whereas if $X$ is a stable curve in $\ov{M}_g$, then the fiber over it is $\Pbar$, a connected and projective scheme, which has a meaningful description in terms of line bundles on the partial blowups of $X$.

Indeed, let $X$ be a quasistable curve of genus $g$ and $L\in\Pic^d X$; we denote the multidegree of $L$ by
$$\de=(d_1,\ldots,d_\ga),$$
where, if $X=\bigcup_{i=1}^\ga C_i$ is the decomposition of $X$ in irreducible components, we have $d_i=\deg L|_{C_i}$ and $d=|\de|$. We say that $\de$ is \textit{balanced} if for any connected subcurve $Z$ of $X$ we have that
\begin{equation}\label{BI}
d\frac{w_Z}{2g-2}-\frac{\delta_Z}2\leq d_i\leq d\frac{w_Z}{2g-2}+\frac{\delta_Z}2,
\end{equation}
where $w_Z=\deg_Z \omega_X$, and for any exceptional component $E$ of $X$ we have $L|_E=\Oc_E(1)$.

We say $\de$ is \textit{strictly balanced} if strict inequalities hold in (\ref{BI}) for every $Z\varsubsetneq X$ such that $Z\cap Z^c\not\subset X_{\rm exc}$, where $X_{\rm exc}$ is the subcurve of the exceptional components of $X$ (see \cite{bib:Néron}). We will denote by $\ov{B_d(X)}$ the set of balanced multidegrees on $X$, and by $B_d(X)$ its subset of strictly balanced ones.

We are going to introduce the scheme $\Pbar$ by looking at its stratification; so let $X$ be a stable curve of genus $g\geq 2$, then, for any $d$, $\Pbar$ is a connected, reduced scheme of pure dimension $g$, such that
\begin{equation}\label{strata}
\Pbar=\coprod_{\emptyset\subset S\subset X^{\rm sing} \atop \de\in B_{d}(\widehat X_S)}P_S^{\de},
\end{equation}
where $P_S^{\de}\cong\Pic^{\de_S}X_S$, $X_S\subset\wh X_S$ as above, and $\de_S=\de|_{X_S}$. In particular, the points in $\Pbar$ are in one-to-one correspondence with equivalence classes of strictly balanced line bundles. Any such class is determined by $S$ and by $M\in\Pic X_S$. Hence a point of $\Pbar$ can be denoted by $[M,S]$, where if $\wh M_S$ is a class of line bundles in $B_{d}(\widehat X_S)$, then $M:=\wh M_S|_{X_S}$, and, by construction, when restricted to every exceptional component of $\wh X_S$, $\wh M_S$ is equal to $\Oc(1)$.

A node $n$ of $X$ is said to be \textit{separating} if $X\setminus\{n\}$ is not connected; we denote by $X^{\rm sing}$ the set of nodes of $X$, and by $X_{\rm sep}$ the subset of separating nodes.

Let $\nu_S:X_S\rightarrow X$ be the normalization of $X$ at the nodes in $S$. It induces the pullback map $\nu_S^*:\Pic^\de X\rightarrow\Pic^\de X_S$; if $M\in\Pic^\de X_S$, we denote by $F_M(X)$ the fiber of $\nu_S^*$ over $M$, and by $W_M(X)$ the intersection $F_M(X)\cap W_\de(X)$.

\section{Irreducible curves}\label{irrcurves}

Let $X$ be an irreducible nodal curve of genus $g$ and, for $d\geq 1$, consider the Brill-Noether variety $W_d(X)$. As a subvariety of ${\rm Pic}^dX$, we are interested in studying its closure $\overline{W_d(X)}$ in the compactified Picard Variety $\overline{P^d_X}$, using the description given in \cite{bib:compJac}. It will turn out that $\overline{W_d(X)}$ is strongly related to the image of the Abel map, that we are going to define.
Let $\dot{X}:=X\setminus X^{\rm sing}$ be the smooth locus of $X$; since $X$ is irreducible, we have that $\dot{X}^d$ is a smooth irreducible variety of dimension $d$, open and dense in $X^d$. Now, for $d\geq 1$, let
$$\begin{array}{cccc}
\al_X^d:& \dot{X}^d&\longrightarrow& \Pic^dX\\
&(p_1,\ldots,p_d)&\mapsto &\Oc_X(\sum\limits_{i=1}^{d}p_i);
\end{array}$$
we call $\al_X^d$ the \textit{Abel map of degree} $d$. It is a regular map, and obviously $\al_X^d(\dot{X}^d)\subset W_d(X)$. We denote by $A_d(X)$ the closure of $\al_X^d(\dot{X}^d)$ in $\Pic^dX$; of course $A_d(X)\subset W_d(X)$.
Let us now introduce the following set
$$\widetilde W_d(X):=\{[M,S]\in\Pbar\mbox{ s.t. }h^0(\widehat X_S,\widehat M_S)> 0\},$$
where $S\subset X^{\rm sing}$ with $\D_S:=\sharp S$, $\widehat X_S=X_S\cup\cup_{i=1}^{\D_S}E_i$ is the blow up of $X$ at the nodes of $S$, and, as we introduced in the previous section, $\wh M_S$ is a class of line bundles in $B_{d}(\widehat X_S)$ such that its resctrictions to the components of $\widehat X_S$ are
$$\widehat M_S|_{X_S}=:M, \quad \widehat M_S|_{E_i}=\Oc(1)\mbox{ for any }i=1,\ldots\D_S.$$
Let us observe that since $h^0(\widehat X_S,\widehat M_S)=h^0(X_S,M)$ (see \cite{bib:theta}[Lemma 4.2.5]), we have:
$$\widetilde W_d(X)=\{[M,S]\in\Pbar\mbox{ s.t. }h^0(X_S,M)> 0\},$$
which is in turn equivalent to:
$$\widetilde W_d(X)\cong\bigsqcup\limits_{S\subset X^{\rm sing}}W_{d-\D_S}(X_S).$$

\begin{teo}\label{Ad(X)}
Let X be an irreducible curve of genus $g\geq 1$ with $\D$ nodes. Then for any $d\geq 1$ we have:
\begin{itemize}
\item[(i)]$A_d(X)=W_d(X)$, hence $W_d(X)$ is irreducible and $\dim W_d(X)=\min\{d,g\}$,
\item[(ii)]$\overline{A_d(X)}=\overline{W_d(X)}=\widetilde{W}_d(X)\subset\Pbar.$
\end{itemize}
\end{teo}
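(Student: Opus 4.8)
The plan is to prove (i) first, since (ii) will follow by taking closures and carefully analysing which boundary strata of $\Pbar$ are hit. For (i), the inclusion $A_d(X)\subseteq W_d(X)$ is automatic since $\al_X^d(\dot X^d)\subseteq W_d(X)$ and $W_d(X)$ is closed in $\Pic^d X$ (it is the locus where $h^0$ jumps, cut out by vanishing of appropriate minors). For the reverse inclusion, I would argue that every $L\in W_d(X)$ is a limit of line bundles in the image of the Abel map. The key point is that $W_d(X)$ parametrizes line bundles $L$ with $h^0(X,L)>0$; pick a nonzero section $s\in H^0(X,L)$. If the zero scheme of $s$ is supported on $\dot X$ (i.e. avoids the nodes) and is reduced, then $L=\Oc_X(\sum p_i)$ lies in $\al_X^d(\dot X^d)$ directly. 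In general the zero divisor may meet $X^{\rm sing}$ or be nonreduced, but one can deform $L$ within $W_d(X)$: since $\dot X$ is dense and $\Pic^d X$ is (a torsor under) an algebraic group of dimension $g$, a dimension count gives $\dim \al_X^d(\dot X^d)=\min\{d,g\}$, and on the other hand $\dim W_d(X)\le \min\{d,g\}$ by upper semicontinuity of $h^0$ and the structure of the theta divisor / Brill--Noether theory for integral curves (cf. Altman--Kleiman \cite{bib:AltKl}). Since $W_d(X)$ is irreducible (being dominated by the irreducible $\dot X^d$ via the Abel map, once we know the image is dense — or, alternatively, because $W_d(X)$ for integral $X$ is known to be irreducible of the expected dimension), the closed irreducible set $A_d(X)$ of the same dimension inside the irreducible $W_d(X)$ must equal it. The main care here is the case $d\ge g$, where $W_d(X)=\Pic^d X$ and one must check the Abel map is dominant — this is standard: a general $L$ of degree $d\ge g$ has $h^0=d-g+1>0$ with base-point-free generic behaviour, so its divisor can be taken reduced and supported on $\dot X$.

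For (ii), the equality $\overline{A_d(X)}=\overline{W_d(X)}$ is immediate from (i) by taking closures in $\Pbar$. The substantive content is $\overline{W_d(X)}=\wt W_d(X)$. I would prove the two inclusions separately. For $\wt W_d(X)\subseteq \overline{W_d(X)}$: take a point $[M,S]\in\wt W_d(X)$, so $M\in \Pic^{\de_S}X_S$ with $h^0(X_S,M)>0$ and $\wh\de_S$ strictly balanced on $\wh X_S$. One wants to exhibit a family of line bundles on $X$ degenerating to the pair $(\wh X_S,\wh M_S)$ whose generic member has a section. This is exactly the statement that $W_M(X)$ (the intersection of $W_d(X)$ with the fiber $F_M(X)$ of the normalization pullback $\nu_S^*$) is nonempty and, more precisely, that points of $W_{d-\D_S}(X_S)$ lift to limits of points of $W_d(X)$. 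Concretely: a section on $X_S$ pushes forward to give, for a suitable gluing, line bundles on $X$ with sections, and the balanced/strictly balanced condition on $\wh\de_S$ guarantees the corresponding limit lands in the stratum $P_S^{\de_S}$ of $\Pbar$ rather than elsewhere. For the reverse inclusion $\overline{W_d(X)}\subseteq \wt W_d(X)$: a point of the closure lies in some stratum $P_S^{\de_S}\cong \Pic^{\de_S}X_S$ and is a limit of a one-parameter family $L_t\in W_d(X)$; by properness of an auxiliary family of (quasistable) curves and semicontinuity of $h^0$ applied to the limiting line bundle $\wh M_S$ on $\wh X_S$, one gets $h^0(\wh X_S,\wh M_S)\ge \lim h^0(X,L_t)>0$, hence $[M,S]\in\wt W_d(X)$. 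Here I would invoke \cite{bib:theta}[Lemma 4.2.5] to pass between $h^0(\wh X_S,\wh M_S)$ and $h^0(X_S,M)$, so that the final description $\wt W_d(X)\cong\bigsqcup_{S}W_{d-\D_S}(X_S)$ drops out.

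The main obstacle I expect is controlling \emph{which} boundary stratum a degenerating family of effective divisors limits into — i.e. matching the combinatorics of the limit line bundle (its multidegree on the blow-up $\wh X_S$) with the strictly-balanced condition defining the stratification \eqref{strata} of $\Pbar$. A section of $L_t$ specializes to a section of the limit sheaf, but to realize the limit as a point $[M,S]$ one must blow up $X$ at the appropriate set $S$ of nodes and check that the resulting multidegree $\wh\de_S$ is strictly balanced; different choices of smoothing or of the limiting line bundle within its equivalence class could a priori land in different strata. The technical heart is thus a local analysis near the nodes: understanding how the zero divisor of a section distributes between a node's two branches in the limit, which controls the multidegree on $\wh X_S$. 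Once this local picture is pinned down — using that for \emph{irreducible} $X$ the combinatorics is mild ($S$ is just a subset of the $\D$ nodes and $X_S$ is again irreducible of genus $g-\D_S$) — the balanced inequalities \eqref{BI} are checked by a direct computation, and the argument closes.
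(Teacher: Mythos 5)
Your overall architecture (prove (i) first, then deduce (ii) by analysing which boundary strata of $\Pbar$ are reached) matches the paper's, and you correctly flag the stratum-matching combinatorics as the delicate point; but the proposal has a genuine gap in each part. In (i), the decisive step is circular: you reduce $W_d(X)\subseteq A_d(X)$ to a dimension count plus the irreducibility of $W_d(X)$, justified either by density of the image of the Abel map (which is exactly what is being proved) or by citing irreducibility of $W_d(X)$ for integral curves as known. Irreducibility of $W_d(X)$ is part of the \emph{conclusion} of the theorem, not an available input, and the inequality $\dim W_d(X)\le\min\{d,g\}$ together with $A_d(X)$ being irreducible of that dimension only shows that $A_d(X)$ is \emph{an} irreducible component of $W_d(X)$; it does not exclude further components (e.g.\ loci of $L$ whose sections all vanish at a node, which are precisely the $L$ not visibly of the form $\Oc_X(\sum p_i)$). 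The paper closes this by an explicit mechanism you do not supply: normalize one node $n$ at a time, study $\rho:W_d(X)\to W_d(X_n)$, and for each $M\in\IM\rho$ --- split into five cases according to $h^0(X_n,M)$ and whether $p,q$ are base points --- show $W_M(X)\subset\ov{A_d(X)}$ using \cite[Lemmas 2.2.3, 2.2.4]{bib:theta} and families $L(p_t)$ with $p_t$ a smooth point specializing to the node (a double specialization when base points occur), then a degree argument on the fibers $F_M(X)\cong k^*$ and induction on $\D$. Your ``one can deform $L$ within $W_d(X)$'' is precisely this missing content.

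For (ii), the easy inclusion $\ov{W_d(X)}\subseteq\wt W_d(X)$ by semicontinuity is fine and agrees with the paper. But for $\wt W_d(X)\subseteq\ov{A_d(X)}$ you state the goal (realize $[M,S]$ as a limit of effective line bundles on $X$ landing in the stratum indexed by $S$) and identify the obstacle, yet give no construction that overcomes it. The paper's construction is again the moving-point one: given $[N,n]$ with $h^0(X_n,N)>0$, it produces $L\in\IM\al_X^{d-1}$ whose pullback is $N$ (or a translate of $N$ when $p$ or $q$ is a base point, handled by a double specialization) and shows $L(p_t)\to[N,n]$ in $\Pbar$ as $p_t\to n$; this is what forces the limit into the blown-up stratum. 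Also note your suggestion that one should check ``$W_M(X)$ is nonempty'' points in the wrong direction: in the generic case $W_M(X)$ is a single point of $F_M(X)$ and the boundary point $[M,S]$ is \emph{not} reached through $W_M(X)$ but through the specializing families just described. Without that degeneration argument the hard half of (ii) remains unproven.
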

\begin{proof}
We start by assuming that $X$ has only one node $n$, and its normalization is $\nu:X_n\rightarrow X$, with $\nu^{-1}(n)=\{p,q\}$. Let us consider the regular dominant map
$$\begin{array}{cccc}
    \rho: & W_d(X) & \rightarrow & W_d(X_n) \\
     & L & \mapsto & \nu^*(L);
  \end{array}
$$
for any $M\in\IM\rho$ we denote by $W_M(X)=\rho^{-1}(M)$, the fiber of $\rho$. We recall that $W_M(X)\subset F_M(X)$, where $F_M(X)\cong k^*$ is the fiber of the pullback map $\nu^*:\Pic^dX\rightarrow {\rm Pic}^dX_n$.
The cardinality of the fibers $W_M(X)$ is at least $0$, so, since $\dim W_d(X_n)=d$, it follows that $\dim W_d(X)\leq d$; moreover $A_d(X)$ is irreducible of dimension $d$, hence we have that $A_d(X)$ is an irreducible component of $W_d(X)$.
We want to prove that for any $M\in\IM\rho$, $W_M(X)\subset\ov{A_d(X)}$, so that $A_d(X)\subset W_d(X)\subset\ov {A_d(X)}$ implies that $W_d(X)=A_d(X)$ and $\ov {W_d(X)}=\ov {A_d(X)}$.
We are now going to analyze all the possible cases.
\begin{itemize}
\item[(1)] $M\in\IM\rho$ with $h^0(X_n,M)=1$ and $$h^0(X_n,M(-p))=h^0(X_n,M(-q))=h^0(X_n,M)-1.$$ Then by \cite[Lemma 2.2.3]{bib:theta}, $W_M(X)=\{L_M\}$ with $L_M\in\IM\al_X^d$.
\item[(2)]$M\in\IM\rho$ with $h^0(X_n,M)\geq 2$ and $$h^0(X_n,M(-p))=h^0(X_n,M(-q))=h^0(X_n,M)-1.$$ We are going to show that there exist two points in $\ov {F_M(X)}\subset\Pbar$ which are contained in $\ov {A_d(X)}$. Indeed, $$\ov {F_M(X)}\setminus F_M(X)=\{[M(-p),n],[M(-q),n]\}.$$ Let us take $[M(-p),n]$; by \cite[Lemmas 2.2.3, 2.2.4]{bib:theta} there exists $L\in{\rm Pic}^{d-1}X$ such that $\nu^*(L)=M(-p)$ and $L\in\IM\al_X^{d-1}$. Let now $p_t\in\dot{X}$ be a moving point specializing to the node, i.e.such that $p_t\stackrel{t\rightarrow 0}{\longrightarrow}n$. Of course $L(p_t)\in\IM\al_X^d$, and $L(p_t)\rightarrow [M(-p),n]$ as $t\rightarrow 0$. Then $[M(-p),n]\in \ov{A_d(X)}$. The same holds for $[M(-q),n]$, so we have that $$\ov {F_M(X)}\setminus F_M(X)\subset\ov{A_d(X)}.$$
\item[(3)]$M\in\IM\rho$ with $h^0(X_n,M)=1$ and $$h^0(X_n,M(-p))=h^0(X_n,M(-q))=h^0(X_n,M).$$ Again we want to prove that $\ov {F_M(X)}\setminus F_M(X)\subset\ov{A_d(X)}$; so let $M'$ be a line bundle on $X_n$ not supported on either $p$ or $q$ such that $M=M'(hp+kq)$; then $M'$ is as in (1) and $\deg M'=d'$ with $d'=d-(h+k)$. Let us consider $[M(-p),n]\in\ov{F_M(X)}$, then $M(-p)=M'(h'p+kq)$, where $h'=h-1$. We choose a moving point $p_t$ on $X_n$ specializing to $p$ as $t$ goes to $0$, and a moving point $q_t$ on $X_n$ such that $q_t$ specializes to $q$. Now fix $t$, and take the line bundle $M_t'':=M'(h'p_t+kq_t)$ on $X_n$; by case (1), there exists $L_t''\in\IM\al_X^{d-1}$ such that $\nu^*(L_t'')=M''_t$. We consider now one moving point $p_u\in\dot{X}$, such that $\nu^*(p_u)$ on $X_n$ specializes to $p$ when $u\rightarrow 0$. As well as we saw in case (2), $L_t''(p_u)\in\IM\al_X^d$ specializes to $[M_t'',n]$ as $u\rightarrow 0$. Hence $[M_t'',n]\in\ov{A_d(X)}$. Now let $t\rightarrow 0$: we see that, by construction, $[M_t'',n]\rightarrow [M(-p),n]$, hence $[M(-p),n]\in\ov{A_d(X)}$. Using the same argument, we get that $[M(-q),n]\in\ov{A_d(X)}$ as well.
\item[(4)]$M\in\IM\rho$ with $h^0(X_n,M)\geq 2$ and either $p$ or $q$ as base point. Choose, say, $p$ as base point, i.e. $h^0(X_n,M(-p))=h^0(X_n,M)=h^0(X_n,M(-q))+1$. Then there exists $M'\in {\rm Pic}^{d'}X_n$, with $M=M'(hp)$, $d'=d-h$, and $M'$ not supported on either $p$ or $q$ up to move the support away. We notice that $M(-p)=M'(h'p)$ with $h'=h-1$, so, as before, we perform a double specialization to show that $[M(-p),n]\in\ov{A_d(X)}$. Concerning $[M(-q),n]$, we have that $M(-q)=M'(hp-q)=:M''(hp)$ for a suitable $M''\in{\rm Pic}^{d'-1}X_n$. Moreover, since $p$ is a base point of $M''(hp)$, $h^0(X_n,M'')\geq 1$. We take again a moving point $p_t$ on $X_n$ specializing to $p$, and a $p_u$ on $X$ such that $\nu^*(p_u)$ specializes to $p$ on $X_n$. We fix $t$ and denote $M_t'':=M''(hp_t)$, then by \cite[Lemmas 2.2.3,2.2.4]{bib:theta} there exists $L_t''$ contained in $\IM\al_X^{d'-1}$ such that $\nu^*(L_t'')=M_t''$. We take $L_t''(p_u)$; letting $u\rightarrow 0$ we get that $L_t''(p_u)\rightarrow [M_t'',n]\in\ov{A_d(X)}$. Now we let $t\rightarrow 0$, and obtain $[M_t'',n]\rightarrow [M(-q),n]$, whence $[M(-q),n]\in\ov{A_d(X)}$.
\item[(5)]$M\in\IM\rho$ with $h^0(X_n,M)\geq 2$ and $h^0(X_n,M(-p))=h^0(X_n,M(-q))=h^0(X_n,M)$. Then there exists $M'\in {\rm Pic}^{d'}X_n$, with $M=M'(hp+kq)$, $d'=d-(h+k)$, and $M'$ not supported on either $p$ or $q$ up to move the support away. As well as above, we consider $[M(-p),n]$ and $[M(-q),n]$ to show that they are contained in $\ov{A_d(X)}$. We proceed as in case (3) performing a double specialization, and recalling that $h^0(X_n,M')\geq 2$ by assumption.
\end{itemize}
Let $U\subset W_d(X_n)$ be the following set:
$$U:=\{M\in W_d(X_n)\mbox{ s.t. } h^0(X_n,M)=1,h^0(X_n,M(-p))=h^0(X_n,M(-q))=0\};$$
this is of course an open set in $W_d(X_n)$, and it contains all the line bundles $M$ studied in case (1). In particular for any $M\in U$, we have that $\ov{A_d(X)}$ intersects $\ov{F_M(X)}$ in only one point $L_M$, where $W_M(X)=\{L_M\}$. In order to verify this assertion, by (1) we just have to check that $[M(-p),n]$ and $[M(-q),n]$ are not contained in $\ov{A_d(X)}$, but this is obvious, since $h^0(X_n,M(-p))=0$, hence on the blow up $\hat X_n$ of $X$ at $n$, $h^0(\hat X_n,\widehat{M(-p)})=0$. From the study of all the possibilities above, from (2) to (5), we get that for any $M\in\IM\rho$ which is not in $U$, $\ov{A_d(X)}$ contains at least two points of $\ov{F_M(X)}$, but since the generic $M$ has $\sharp(\ov{F_M(X)}\cap\ov{A_d(X)})=1$, we have that for $M\in\IM\rho\setminus U$, the whole $\ov{F_M(X)}$ must be contained in $\ov{A_d(X)}$, hence for any $M\in\IM\rho$ we have that $\ov{W_M(X)}\subset\ov{A_d(X)}$.

So we have shown that $W_d(X)=A_d(X)$, with subsequent equality of their closures. In order to show that $\ov{W_d(X)}=\widetilde W_d(X)$, we argue like this: direction $\subset$ is obvious, since $\widetilde W_d(X)$ is a closed set in $\ov{P}_X^d$ containing $W_d(X)$. On the other hand, the analysis made above suggests that any $[N,n]\in\widetilde W_d(X)$ is also an element of $\ov{A_d(X)}$. Indeed if $N$ has $p$ and/or $q$ as base points, we argue as in (3),(4),(5); if otherwise $N$ does not contain $p$ nor $q$ in its support, by (2) we get that there exists $L(p_t)\in\IM\al_X^d$, such that $L(p_t)$ specializes to $[N,n]$ as $t\rightarrow 0$.

If the number of nodes $\D$ is $\geq 2$, we proceed by induction on $\D$. Indeed, let $X$ be a nodal irreducible curve having $\D$ nodes. We blow up $X$ at one node $n$, so that $\hat X_n$ is the blown up curve, and $X_n$ is the strict transform, and we have the normalization map $\nu:X_n\rightarrow X$ such that $\nu^{-1}(n)=\{p,q\}$. So again we look at the dominant morphism $\rho:W_d(X)\rightarrow W_d(X_n)$, and we prove that the fibers $W_M(X)\subset\ov{A_d(X)}$ for any $M\in\IM\rho$. As inductive hypothesis we assume that $W_d(X_n)=A_d(X_n)$ is irreducible of dimension $d$. This is the only point where we used the smoothness of $X_n$ in the previous case when $\D=1$; hence reapplying the argument above, which is based on \cite[Lemmas 2.2.3,2.2.4]{bib:theta}, we get the conclusions for every $\D$ and for every $d\geq 1$.
\end{proof}

\begin{remark}
{\rm We observe that when $d\geq g$, with $g$ the genus of $X$, it doesn't make sense referring to $W_d(X)$, since it is equal to $\Pic^dX$. On the other hand, when $d=1$ we have that by \cite[Lemma 2.2.3]{bib:theta}, $W_1(X)=\IM\al_X^1=A_1(X)$, and when $d=g-1$ we get that the Theta divisor is irreducible in $\Pic^{g-1}X$.}
\end{remark}

\begin{remark}\label{limit}
{\rm From the equality $A_d(X)=W_d(X)$ for any $d$, we deduce an important fact; we use the previous notation, where $X$ has $\D$ nodes and $X_n$ is the normalization at a node $n$. Let $L\in W_d(X)$ be such that $M=\nu^*L$ has $W_M(X)=F_M(X)$. Then $k^*=W_M(X)$, and we can denote its elements in the following way: $$W_M(X)=\{L^c\mbox{, }c\in k^*\}.$$ By \ref{Ad(X)} we have that for any $c\in k^*$ there exists a family $L_t^c\in\IM\al_X^d$ such that $L_t^c\rightarrow L^c$. In particular, we will have that $L_t^c=\tilde L_t^c(hp^c_t+kq^c_t)$ for suitable $h,k$, $p^c_t,q_t^c\in\dot X$ such that $\nu^*(p_t^c)$ specializes to $p$ on $X_n$, $\nu^*(q_t^c)$ specializes to $q$, and $\tilde L^c_t$ specializes to some effective line bundle on $X$ not supported on $n$. Hence we can assume $\tilde L_t^c=\tilde L^c$ not depending on $t$; so, for any $c\in k^*$, we have $\tilde L^c(hp^c_t+kq^c_t)\rightarrow L^c$. If $\tilde L^c$ is such that no other effective line bundle is in its fiber, we have that $\tilde L^c=\tilde L$, and $\tilde L(hp^c_t+kq^c_t)\rightarrow L^c$, so in this case the limit depends only upon the choice of the moving points $p_t^c$ and $q_t^c$. Equivalently, if $c\neq c'$ in $k^*$, there exist moving points $p_t^c,q_t^c$ and $p_t^{c'},q_t^{c'}$ such that $\tilde L(hp^c_t+kq^c_t)\rightarrow L^c$ and $\tilde L(hp^{c'}_t+kq^{c'}_t)\rightarrow L^{c'}$.}
\end{remark}

\section{Reducible curves}\label{redcurves}

Very little is known about Abel maps of reducible curves, even if recently a lot of effort has been put into studying the class of stable curves, see for example \cite{bib:theta}, \cite{bib:binary}, \cite{bib:linear}, \cite{bib:CoelTesi},\cite{bib:CoelPac}. We are going to study the relation among the varieties $W_\de(X)$, $A_\de(X)$ and their closures in $\Pbar$. Let $X$ be a reducible curve with components $C_1,\ldots,C_\ga$; for any $\de=(d_1,\ldots,d_\ga)\in\bZ^\ga$ with $|\de|=d$, we can consider the Brill-Noether variety $W_\de(X)$ that we defined in the introduction of the paper. Obviously if $d_i< 0$ for every $i=1,\ldots,\ga$, we get that $W_\de(X)=\emptyset$. On the other hand, if we assume $\de\geq\un 0$, i.e. $d_i\geq 0$ for every $i$, we can define the \textit{Abel map of multidegree} $\de$. Set $\dot X:=X\setminus X^{\rm sing}$, and $\dot C_i=C_i\cap\dot X$; we define $$\dot X^\de:=\dot C_1^{d_1}\times\ldots\times\dot C_\ga^{d_\ga}\subset X^\de:=C_1^{d_1}\times\ldots\times C_\ga^{d_\ga},$$
and
$$\begin{array}{rccc}
\al_X^\de:&\dot X^\de &\longrightarrow &\Pic^\de X \\
&(p_1,\ldots,p_d)&\mapsto &\Oc_X(\sum\limits_{i=1}^d p_i).
\end{array}$$
As in the irreducible case, we denote by $A_\de(X)$ the closure of the set ${\IM\al_X^\de}\subset\Pic^\de X$.
We are now going to introduce a set which will be crucial hereafter.
\begin{equation}\label{W^+}
W_\de^{+}(X):=\{L\in\Pic^\de X\mbox{ s.t. }h^0(Z,L|_Z)>0\mbox{ for any subcurve }Z\subseteq X\}.
\end{equation}

This definition suggests the following

\begin{lemma}\label{inclusion}
Let $\de\geq\un 0$ be a multidegree on a reducible curve $X$. Then $$A_\de(X)\subset W_\de^+(X).$$
\end{lemma}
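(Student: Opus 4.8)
The plan is to show that any line bundle in the image of the Abel map lies in $W_\de^+(X)$, and then conclude by closedness of $W_\de^+(X)$. The key point is that effectivity is inherited by restriction to subcurves of a specific, visibly effective, divisor.

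First I would take a point $L=\Oc_X(\sum_{i=1}^d p_i)\in\IM\al_X^\de$, where $(p_1,\ldots,p_d)\in\dot X^\de$, so the $p_i$ are smooth points of $X$ distributed so that exactly $d_j$ of them lie on $C_j$. Write $D=\sum_{i=1}^d p_i$, an effective Cartier divisor on $X$ supported on the smooth locus $\dot X$, with $L=\Oc_X(D)$. Now let $Z\subseteq X$ be any subcurve. Since $D$ is supported away from the nodes, in particular away from $Z\cap Z^c$, the divisor $D$ restricts to a well-defined effective divisor $D|_Z=\sum_{p_i\in Z}p_i$ on $Z$ (each $p_i$ is a smooth point of $X$, hence lies on a unique component, hence either on $Z$ or on $Z^c$ but not on the intersection). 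Therefore $L|_Z=\Oc_Z(D|_Z)$ has a nonzero section, namely the one vanishing on $D|_Z$; equivalently $h^0(Z,L|_Z)\ge 1>0$. Since $Z$ was arbitrary, $L\in W_\de^+(X)$. This shows $\IM\al_X^\de\subseteq W_\de^+(X)$.

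It remains to observe that $W_\de^+(X)$ is closed in $\Pic^\de X$. This follows from semicontinuity: for each fixed subcurve $Z\subseteq X$, the restriction map $\Pic^\de X\to\Pic^{\de|_Z}Z$, $L\mapsto L|_Z$, is a morphism, and the locus where $h^0(Z,\cdot)>0$ is closed in $\Pic^{\de|_Z}Z$ by upper semicontinuity of cohomology; pulling back and intersecting over the finitely many subcurves $Z$ of $X$ gives that $W_\de^+(X)$ is closed. Hence $A_\de(X)=\ov{\IM\al_X^\de}\subseteq W_\de^+(X)$, which is the assertion.

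I expect no serious obstacle here; the only point requiring a little care is the claim that $D|_Z$ is a genuine effective divisor on $Z$, which relies essentially on the definition of $\al_X^\de$ on $\dot X^\de$ (the points are smooth, so they specify unambiguously a component and restrict cleanly). The closedness of $W_\de^+(X)$ is a standard semicontinuity argument over the finite set of subcurves, and could alternatively be folded into the remark that $W_\de^+(X)$ is by construction a finite intersection of preimages of Brill–Noether loci.
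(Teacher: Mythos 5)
Your proposal is correct and follows essentially the same route as the paper: effectivity of the restricted divisor shows $\IM\al_X^\de\subseteq W_\de^+(X)$, and upper semicontinuity of $h^0$ on each subcurve carries this over to the closure $A_\de(X)$. The paper's proof is just a two-sentence version of yours, leaving the closedness of $W_\de^+(X)$ implicit where you spell it out.
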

\begin{proof}
The proof is straightforward: the line bundles in $\IM\al_X^\de$ are of the form $\Oc_X(\sum_{i=1}^d p_i)$, hence their restriction to any subcurve of $X$ has nonzero sections. Then by upper semicontinuity of the dimension of the $H^0$ this is still true for their limits in $A_\de(X)$.
\end{proof}

We start by studying the simplest case, i.e. when $X$ is a curve of compact type.

\subsection{{Curves of compact type}}
When $X$ is a curve of compact type, for any multidegree $\de$ we have that $\Pic^\de X$ is complete, hence so is $W_\de(X)$. However we are interested in the relation between $A_\de(X)$ and $W_\de(X)$. We start by assuming that $X$ has two smooth components $C_1,C_2$ meeting at one node $n$, hence its normalization is the disconnected curve $$C_1\sqcup C_2\stackrel{\nu}{\longrightarrow}X,$$ with $\nu^{-1}(n)=\{p,q\}$. This induces the pullback map
$$\Pic^{(d_1,d_2)} X\stackrel{\nu^*}{\longrightarrow}\Pic^{d_1} C_1\times\Pic^{d_2} C_2,$$
which is an isomorphism, and given $L\in W_\de(X)$, we denote $(L_1,L_2):=\nu^*(L)$. We define the sets:
\begin{equation}\label{components}
\begin{array}{rl}
W_\de^{+}(X):=&\{L\in W_\de(X)\mbox{ s.t. }h^0(C_1,L_1)> 0,h^0(C_2,L_2)>0\},\\&\\
W_\de^{+-}(X):=&\{L\in W_\de(X)\mbox{ s.t. }h^0(C_1,L_1)>0,h^0(C_2,L_2)=0\},\\ &\\
W_\de^{-+}(X):=&\{L\in W_\de(X)\mbox{ s.t. }h^0(C_1,L_1)=0,h^0(C_2,L_2)>0\};
\end{array}
\end{equation}
of course we have that $W_\de(X)=W_\de^{+}(X)\sqcup W_\de^{+-}(X)\sqcup W_\de^{-+}(X)$ set-theoretically.
\begin{prop}\label{compact2}
Let $X$ be a curve of compact type of genus $g$ with two smooth components $C_1,C_2$ of genus resp. $g_1,g_2$. Let $\de\geq\un 0$ be a multidegree with $|\de|=d$ such that $1\leq d\leq g-1$. We have:
\begin{itemize}
\item[(i)]if $d_1\leq g_1-1$ and $d_2\leq g_2-1$, then $W_\de(X)$ is connected and has $3$ irreducible components, of dimensions $d,d_1+g_2-1,d_2+g_1-1$,
\item[(ii)]if $d_1\geq g_1$ and $d_2\leq g_2-1$ (up to swapping the indices), $W_\de(X)$ is connected and has $2$ irreducible components.
\end{itemize}
\end{prop}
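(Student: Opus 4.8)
\emph{Proof plan.} The statement concerns only the Brill--Noether variety $W_\de(X)\subset\Pic^\de X$, so I would begin by making $h^0(X,L)$ explicit. Write $\nu^{-1}(n)=\{p,q\}$ with $p\in C_1$ and $q\in C_2$, and for $L\in\Pic^\de X$ put $(L_1,L_2)=\nu^*L$. Twisting the normalization sequence $0\to\Oc_X\to\nu_*\Oc_{C_1\sqcup C_2}\to k_n\to 0$ by $L$ yields the exact sequence
$$0\longrightarrow H^0(X,L)\longrightarrow H^0(C_1,L_1)\oplus H^0(C_2,L_2)\stackrel{\mathrm{ev}}{\longrightarrow}k,$$
where $\mathrm{ev}(s_1,s_2)=s_1(p)-s_2(q)$; hence $h^0(X,L)=h^0(C_1,L_1)+h^0(C_2,L_2)-\varepsilon$ with $\varepsilon=\mathrm{rk}(\mathrm{ev})\in\{0,1\}$, where $\varepsilon=0$ exactly when $h^0(C_1,L_1(-p))=h^0(C_1,L_1)$ and $h^0(C_2,L_2(-q))=h^0(C_2,L_2)$ (with the convention that $p$ is a base point of $L_1$ as soon as $h^0(C_1,L_1)=0$, and likewise for $q$). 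As $\nu^*\colon\Pic^\de X\to\Pic^{d_1}C_1\times\Pic^{d_2}C_2$ is an isomorphism, I would use it throughout to read ``$L\in W_\de(X)$'' as a condition on $(L_1,L_2)$.

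Running this formula through the cases for $(h^0(L_1),h^0(L_2))$, one gets the set-theoretic identities $W_\de^{+}(X)=\{(L_1,L_2):h^0(L_1)>0,\ h^0(L_2)>0\}$ (every such pair lies in $W_\de(X)$), $W_\de^{+-}(X)=\{h^0(L_1(-p))>0,\ h^0(L_2)=0\}$, and $W_\de^{-+}(X)=\{h^0(L_1)=0,\ h^0(L_2(-q))>0\}$; the one point that needs care is that in the region $h^0(L_2)=0$ the formula reads $h^0(X,L)=h^0(L_1)-\varepsilon$, which is positive if and only if $h^0(L_1(-p))>0$ (and symmetrically on the other component, while $h^0(L_1)=h^0(L_2)=0$ never gives a point of $W_\de(X)$). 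Recalling from the introduction that on a smooth curve $D$ one has $W_e(D)=\IM\al_D^e$, irreducible of dimension $\min\{e,g_D\}$ and equal to $\Pic^eD$ iff $e\ge g_D$, that $\{L_1:h^0(L_1(-p))>0\}=\Oc_{C_1}(p)\otimes W_{d_1-1}(C_1)$ is a closed translate of $W_{d_1-1}(C_1)$ (and symmetrically on $C_2$), and that $\{h^0(L_2)=0\}$ is dense in $\Pic^{d_2}C_2$ when $d_2\le g_2-1$ (and $\{h^0(L_1)=0\}$ dense in $\Pic^{d_1}C_1$ when $d_1\le g_1-1$), I would pass to closures --- using that the closure of a product is the product of the closures --- to obtain $W_\de^{+}(X)=W_{d_1}(C_1)\times W_{d_2}(C_2)$, $\ov{W_\de^{+-}(X)}=\bigl(\Oc_{C_1}(p)\otimes W_{d_1-1}(C_1)\bigr)\times\Pic^{d_2}C_2$, and $\ov{W_\de^{-+}(X)}=\Pic^{d_1}C_1\times\bigl(\Oc_{C_2}(q)\otimes W_{d_2-1}(C_2)\bigr)$, each closed and irreducible, with $W_\de(X)$ equal to their union. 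Connectedness follows at once, since $\ov{W_\de^{+-}(X)}$ and $\ov{W_\de^{-+}(X)}$ each meet $W_\de^{+}(X)$.

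It then remains to decide which of these three irreducible closed sets are actual irreducible components, i.e.\ which are not contained in another; since $W_\de(X)$ is their union, the components are exactly the maximal ones. In case (i) (say $1\le d_1\le g_1-1$ and $1\le d_2\le g_2-1$) their dimensions are $d_1+d_2=d$, $d_1+g_2-1$ and $d_2+g_1-1$, and every possible inclusion is ruled out by projecting to one of the two factors and comparing dimensions there: for instance $W_\de^{+}(X)\subseteq\ov{W_\de^{+-}(X)}$ would force $W_{d_1}(C_1)\subseteq\Oc_{C_1}(p)\otimes W_{d_1-1}(C_1)$, impossible since $d_1>d_1-1$, while $\ov{W_\de^{+-}(X)}\subseteq W_\de^{+}(X)$ would force $\Pic^{d_2}C_2=W_{d_2}(C_2)$, impossible since $d_2\le g_2-1$; the remaining pairs are excluded the same way, so all three are distinct components of the stated dimensions. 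In case (ii), where $d_1\ge g_1$ (so $W_{d_1}(C_1)=\Pic^{d_1}C_1$) and $d_2\le g_2-1$, the same bookkeeping gives $\ov{W_\de^{-+}(X)}\subseteq W_\de^{+}(X)$ (on the $C_1$-factor $\Pic^{d_1}C_1\subseteq\Pic^{d_1}C_1$, on the $C_2$-factor $\Oc_{C_2}(q)\otimes W_{d_2-1}(C_2)\subseteq W_{d_2}(C_2)$), so that piece is absorbed and $W_\de(X)=W_\de^{+}(X)\cup\ov{W_\de^{+-}(X)}$; when $d_1=g_1$ these two are distinct --- on the $C_1$-factor $\Pic^{g_1}C_1$ has dimension $g_1>g_1-1=\dim\bigl(\Oc_{C_1}(p)\otimes W_{g_1-1}(C_1)\bigr)$, and on the $C_2$-factor $\Pic^{d_2}C_2\ne W_{d_2}(C_2)$ --- which yields the two components claimed. (When $d_1>g_1$ one moreover has $W_{d_1-1}(C_1)=\Pic^{d_1-1}C_1$, so $\ov{W_\de^{+-}(X)}=\Pic^\de X=W_\de(X)$ is irreducible; thus (ii) is effectively the case $d_1=g_1$.)

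The genuinely delicate step is the first one --- correctly turning ``$h^0(X,L)>0$'' into a condition on the restrictions $(L_1,L_2)$, where the base-point behaviour at the node, encoded in the $\varepsilon$ term, must be tracked case by case. Once the set-theoretic picture and the closures are established, the component count and the connectedness reduce to the elementary dimension comparisons on the two projections described above.
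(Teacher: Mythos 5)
Your proof is correct and follows essentially the same route as the paper's: the same set-theoretic decomposition $W_\de(X)=W_\de^{+}(X)\sqcup W_\de^{+-}(X)\sqcup W_\de^{-+}(X)$ read through the isomorphism $\nu^*$, the same identifications $W_\de^{+}(X)=(\nu^*)^{-1}(W_{d_1}(C_1)\times W_{d_2}(C_2))$ and $\ov{W_\de^{+-}(X)}=(\nu^*)^{-1}(\La_p\times\Pic^{d_2}C_2)$ with $\La_p=\phi_p(W_{d_1-1}(C_1))$, and the same dimension counts; the only differences are that you derive the gluing formula for $h^0(X,L)$ directly from the normalization sequence where the paper cites \cite[2.1.1]{bib:theta}, that you spell out the maximality check (no component contained in another) which the paper leaves implicit, and that you do not prove $A_\de(X)=W_\de^{+}(X)$ --- which is indeed not part of the statement, though the paper establishes it here because it is reused in Lemma \ref{genercomp}. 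Your parenthetical remark on case (ii) is a genuine catch rather than a defect: when $d_1\geq g_1+1$ one has $W_{d_1-1}(C_1)=\Pic^{d_1-1}C_1$, hence $\La_p=\Pic^{d_1}C_1$ and $\ov{W_\de^{+-}(X)}=\Pic^\de X\supseteq W_\de^{+}(X)$, so $W_\de(X)$ is irreducible and the claim of two components (and the paper's dimension $d_1+g_2-1$) only holds for $d_1=g_1$; the paper's proof does not address this. Similarly, your silent strengthening to $d_i\geq 1$ in case (i) dodges the analogous degeneration at $d_1=0$, where $\La_p=\emptyset$ and the third component disappears; both are boundary defects of the proposition as stated, not of your argument.
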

\begin{proof} In order to prove (i) we assume that $d_1\leq g_1-1$ and $d_2\leq g_2-1$. We consider the pullback map
$$\begin{array}{cccc}
\nu^*: & \Pic^\de X &\stackrel{\cong}{\longrightarrow}& \Pic^{d_1}C_1\times\Pic^{d_2}C_2\\
& L &\mapsto & (L_1,L_2);
\end{array}$$
then by \cite[2.1.1]{bib:theta} using that $\D=1$,
\begin{equation}\label{pullproduct}
W_\de^{+}(X)=(\nu^*)^{-1}(W_{d_1}(C_1)\times W_{d_2}(C_2)).
 \end{equation}
 Now since $C_1,C_2$ are smooth curves, we have that $W_{d_i}(C_i)$ is irreducible of dimension $d_i$ for $i=1,2$. Then $W_\de^{+}(X)$ is a closed irreducible set containing $A_\de(X)$. Since the fibers of $\nu^*$ have cardinality one, $\dim W_\de^{+}(X)=d$. By definition we know that $\IM\al_X^\de=(\nu^*)^{-1}(\IM\al^{d_1}_{C_1}\times \IM\al^{d_2}_{C_2})$, hence $\dim\IM\al_X^\de=d$, then $A_\de(X)=W_\de^{+}(X)$ and they both have dimension $d$.

The other two components of $W_\de(X)$ are the following ones: consider $L\in W_\de^{+-}(X)$; we have that $h^0(C_2,L_2)= 0$, and since $L$ has nonzero sections, we have $h^0(C_1,L_1(-p))> 0$. As in \cite{bib:survey} we define the set
\begin{equation}\label{lambda}
\La_p:=\{L_1\in\Pic^{d_1}C_1\mbox{ s.t. }h^0(C_1,L_1(-p))> 0\},
\end{equation}
and consider the isomorphism
\begin{equation}\label{iso}
\begin{array}{cccc}
\phi_p:&\Pic^{d_1-1}C_1&\longrightarrow&\Pic^{d_1}C_1\\
&M&\mapsto&M(p).
\end{array}
\end{equation}
It is easy to see that $\La_p=\phi_p(W_{d_1-1}(C_1))$, hence $\La_p$ is closed and irreducible of dimension $d_1-1$. Now consider the set $$\ov W_\de^{+-}(X):=(\nu^*)^{-1}(\La_p\times\Pic^{d_2}C_2);$$ it contains $$W_\de^{+-}(X)=(\nu^*)^{-1}(\La_p\times(\Pic^{d_2}C_2\setminus W_{d_2}C_2))$$ as an open set, and $\dim\ov W_\de^{+-}(X)=d_1+g_2-1$.

The last irreducible component of $W_\de(X)$ is the one containing the $L$'s  such that $h^0(C_1,L_1)=0$ and $h^0(C_2,L_2)\neq 0$. Arguing as before, we define the set $\La_q\subset\Pic^{d_2}C_2$, and the isomorphism $\phi_q:\Pic^{d_2-1}C_2\rightarrow\Pic^{d_2}C_2$ sending $N\in\Pic^{d_2-1}C_2$ to $N(q)$. Hence $\La_q=\phi_q(W_{d_2-1}(C_2))$, and the set $$\ov W_\de^{-+}(X):=(\nu^*)^{-1}(\Pic^{d_1}C_1\times\La_q)$$ is the closure of $W_\de^{-+}(X)$, with $\dim\ov W_\de^{-+}(X)=d_2+g_1-1$. Hence we have that
$$W_\de(X)=A_\de(X)\cup\ov W_\de^{+-}(X)\cup\ov W_\de^{-+}(X),$$
and their intersection is $(\nu^*)^{-1}(\La_p\times\La_q)$, having dimension $d_1-1+d_2-1=d-2$. This implies that $W_\de(X)$ is connected.

Part (ii) comes from part (i), once we have noticed that if $d_1\geq g_1$ and $d_2\leq g_2-1$, then $h^0(C_1,L_1)>0$, so $W_\de^{-+}(X)=\emptyset$. Hence $$W_\de(X)=W_\de^{+}(X)\cup\ov W_\de^{+-}(X),$$
and their intersection is $(\nu^*)^{-1}(\La_p)$, having dimension $d_1-1$. We notice that in this case by (\ref{pullproduct}), $\dim W_\de^{+}(X)=g_1+d_2$, which can be less than $d$. We prove that even in this case it holds that $W_\de^{+}(X)=A_\de(X)$. Indeed, inclusion ($\supset$) is obvious, and concerning ($\subset$), let us take a line bundle $L\in W_\de^{+}(X)$. Then we look at its pullback $M=\nu^*(L)$. Let $M=(\Oc_{C_1}(D_1+\la p),\Oc_{C_2}(D_2+\mu q))$ for some suitable divisors $D_1$ and $D_2$; we choose moving points $p_t$ on $C_1\cap X$ and $q_t$ on $C_2\cap X$, specializing resp. to $p$ and $q$. We consider on $C_1\sqcup C_2$ the line bundle:
$$M_t:=(\Oc_{C_1}(D_1+\la p_t),\Oc_{C_2}(D_2+\mu q_t)),$$
and push it down to $X$, getting the (unique) line bundle $L_t\in\IM\al_X^\de$ such that $\nu^*(L_t)=M_t$. Then if we let $t$ tend to $0$, we get that $L_t$ specializes to $L$, and hence that $L\in A_\de(X)$. So we conclude that $W_\de^{+}(X)=A_\de(X)$. It follows that $\dim W_\de(X)=\max\{g_1+d_2,d_1+g_2-1\}$. If vice-versa $d_2\geq g_2$ and $d_1\leq g_1-1$, we have that $W_\de^{+-}(X)=\emptyset$, $W_\de(X)=A_\de(X)\cup\ov W_\de^{-+}(X)$, $A_\de(X)\cap\ov W_\de^{-+}(X)=(\nu^*)^{-1}(\La_q)$,
and $\dim W_\de(X)=\max\{d_1+g_2,d_2+g_1-1\}$.
\end{proof}
\begin{remark}
{\rm We just observe that the case $d=g-1$ is carried out in \cite{bib:theta}, but we obtain it as a by-product in \ref{compact2}(ii); since there are no strictly balanced multidegrees summing to $g-1$ on a curve of compact type, we get that $W_\de(X)$ is not irreducible.}
\end{remark}

In the sequel we will try to generalize our study to any curve of compact type, so take $X$ as the union of irreducible smooth curves $C_1,\ldots,C_\ga$, with $g_i$ the genus of $C_i$ and $g$ the genus of $X$. Notice that since $X$ is of compact type, we have that $\sharp(C_i\cap C_j)=1$ for $i\neq j$, and this implies that the total number of nodes $\D\leq\ga-1$; we denote by $n_{ij}$ the intersection point $C_i\cap C_j$. Let $\de\geq\un 0$ be a multidegree on $X$, with $|\de|=d$, $1\leq d\leq g-1$. Let $$\nu:\bigsqcup\limits_{i=1}^\ga  C_i\rightarrow X$$ be the total normalization map, $\nu^*$ the pullback as before, and denote by $(L_1,\ldots,L_\ga)$ the pullback to $\bigsqcup\limits_{i=1}^\ga  C_i$ of any $L\in\Pic^\de X$. If $n_{ij}$ is a node, its branches on $C_i,C_j$ will be called respectively $p^i_j,p^j_{i}$, distinguishing the curve they belong to by the position of indices.
\begin{lemma}\label{genercomp}
Let $X$ be a connected curve of compact type as above and $\de\geq\un 0$. Then $W_\de^{+}(X)=A_\de(X)$, is a (closed) irreducible component of $W_\de(X)$.
\end{lemma}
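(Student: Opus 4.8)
The plan is to reduce everything to the smooth components via the normalization. Since $X$ is of compact type, every node is separating and the pullback $\nu^*\colon\Pic^\de X\to\prod_{i=1}^\ga\Pic^{d_i}C_i$ is an isomorphism. The key identification is
$$W_\de^{+}(X)=(\nu^*)^{-1}\Big(\prod_{i=1}^\ga W_{d_i}(C_i)\Big).$$
One inclusion is immediate, since taking $Z=C_i$ in the definition of $W_\de^+(X)$ already forces $h^0(C_i,L_i)>0$. For the converse one checks that on a compact-type curve the conditions $h^0(C_i,L_i)>0$ for all $i$ imply $h^0(Z,L|_Z)>0$ for every subcurve $Z$: writing a connected $Z$ as $Z'\cup C$ glued at a single separating node $n$ with $C$ a leaf, nonzero sections of $L|_{Z'}$ and of $L|_C$ glue to a nonzero section of $L|_Z$ --- if one of them vanishes at $n$ one glues it with the zero section on the other component, otherwise one rescales --- and one concludes by induction on the number of components, using additivity of $h^0$ for disconnected $Z$. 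This uses only the behaviour of sections at separating nodes, in the spirit of \cite[2.1.1]{bib:theta}. Since each $W_{d_i}(C_i)$ is irreducible (it is the image of $\al^{d_i}_{C_i}$, or all of $\Pic^{d_i}C_i$ when $d_i\geq g_i$) of dimension $\min\{d_i,g_i\}$, and $\nu^*$ is an isomorphism, $W_\de^+(X)$ is closed and irreducible of dimension $\sum_i\min\{d_i,g_i\}$.

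The equality $W_\de^{+}(X)=A_\de(X)$ then follows at once. The inclusion $A_\de(X)\subseteq W_\de^+(X)$ is Lemma \ref{inclusion}. For the reverse inclusion, note that $\nu^*(\IM\al_X^\de)=\prod_i\al^{d_i}_{C_i}(\dot C_i^{d_i})$, and $\al^{d_i}_{C_i}(\dot C_i^{d_i})$ is dense in $W_{d_i}(C_i)$ because $\dot C_i^{d_i}$ is dense in $C_i^{d_i}$ and $\al^{d_i}_{C_i}$ extends to a morphism $C_i^{d_i}\to\Pic^{d_i}C_i$ with image $W_{d_i}(C_i)$; pulling back through the isomorphism $\nu^*$ shows that $\IM\al_X^\de$ is dense in $W_\de^+(X)$, whence $A_\de(X)=\ov{\IM\al_X^\de}=W_\de^+(X)$, the latter being already closed. (Alternatively one compares dimensions, $\dim\IM\al_X^\de=\sum_i\min\{d_i,g_i\}=\dim W_\de^+(X)$, and concludes by irreducibility.)

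It remains to see that $W_\de^+(X)$ is an \emph{irreducible component} of $W_\de(X)$, not merely an irreducible closed subset; this is the delicate point. I would argue by induction on $\ga$, the case $\ga=1$ being trivial and $\ga=2$ being Proposition \ref{compact2}. For the inductive step, peel off a leaf component $C=C_\ga$ meeting $X'=\ov{X\setminus C}$ at a single node $n$, with branches $p\in X'$ and $q\in C$; writing $(L',L_C)=\nu_n^*(L)$ and using the Mayer--Vietoris sequence of the separating node $n$, one expresses $h^0(X,L)$ in terms of $h^0(X',L')$, $h^0(C,L_C)$, and of whether $p$ and $q$ are base points. Feeding in the inductive hypothesis that $W_{\de'}^+(X')=A_{\de'}(X')$ is a component of $W_{\de'}(X')$ (with $\de'=\de|_{X'}$), one obtains a description of the irreducible pieces of $W_\de(X)$ built out of those of $W_{\de'}(X')$ and of $W_{d_\ga}(C)$, with $W_\de^+(X)$ corresponding to the piece where no global section of $L$ is forced to vanish identically on a proper subcurve, in analogy with Proposition \ref{compact2}. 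One then checks that every other piece is cut out inside $W_\de(X)$ by an extra vanishing condition on some restriction $L|_{C_i}$, of the $\La$-type appearing in \eqref{lambda}, and that this condition fails at the generic point of $W_\de^+(X)$; hence $W_\de^+(X)$ is contained in none of the other pieces, and is therefore a component. The enumeration of these boundary strata and the verification that each of them misses the generic point of $W_\de^+(X)$ is the main obstacle; everything preceding it is formal, given that $\nu^*$ is an isomorphism.
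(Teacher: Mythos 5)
Your argument is essentially the paper's: the same key identification $W_\de^{+}(X)=(\nu^*)^{-1}\bigl(\prod_{i}W_{d_i}(C_i)\bigr)$ (which the paper gets by citing \cite[2.1.1]{bib:theta} rather than re-proving the gluing at separating nodes), the same deduction of closedness and irreducibility, and the same proof of $A_\de(X)=W_\de^{+}(X)$ --- the paper phrases the reverse inclusion as a specialization of moving points on each $C_i$ toward the branches of the nodes, as in Proposition \ref{compact2}(ii), which is equivalent to your density argument. Two remarks. First, your dimension $\sum_i\min\{d_i,g_i\}$ is the correct one; the paper's proof asserts dimension $d_1+\cdots+d_\ga=d$, which is accurate only when $d_i\leq g_i$ for every $i$ (the paper itself records the discrepancy in Proposition \ref{compact2}(ii), where $\dim W_\de^{+}(X)=g_1+d_2$ can be less than $d$). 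Second, the point you single out as the obstacle --- that $W_\de^{+}(X)$ is not contained in the closure of any other stratum $W_\de^{\un I}(X)$ and hence is a genuine irreducible component --- is likewise not established inside the paper's own one-paragraph proof of the lemma; it is only justified by the subsequent description of the remaining pieces as $(\nu^*)^{-1}(\Sigma_{\un I})$, where the extra $\La_j$-type vanishing conditions visibly fail at the generic point of $\prod_i W_{d_i}(C_i)$. That is exactly the strategy you sketch, so your plan is aligned with the paper's; just be aware that neither your write-up nor the paper's proof of the lemma itself carries out that verification in full.
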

\begin{proof}
The proof is straightforward: we see that, as we pointed out in the case $\ga=2$,
\begin{equation}\label{W1..1}
W_\de^{+}(X)=(\nu^*)^{-1}(W_{d_1}C_1\times\cdots\times W_{d_\ga} C_\ga),\end{equation}
indeed $X$ has a number of nodes $\D=\ga-1$, so we apply \cite[2.1.1]{bib:theta} and obtain the equality.
Since $C_i$ is smooth for every $i$, by (\ref{W1..1}) $W_\de^{+}(X)$ turns out to be a closed irreducible set of dimension $d_1+\cdots+d_\ga=d$, and it contains $A_\de(X)$. To see the inverse inclusion we argue as in \ref{compact2}(ii), proving that for any $L\in W_\de^{+}(X)$ there exists $L_t\in\IM\al_X^\de$ such that, if we let $t$ tend to $0$, we get that $L_t$ specializes to $L$. So we have $W_\de^{+}(X)=A_\de(X)$ as we wanted.
\end{proof}

What we are going to do now is to study the remaining irreducible components of $W_\de(X)$. To do this we need to introduce some notation: let $\D_i=\sharp(C_i\cap\ov{X\setminus C_i})$ for $i=1,\ldots,\ga$, and let $\un I$ be a $1\times\ga$ vector where the $j$-th component is $I_j=+$ or $I_j=-$. Then we can define the set:
$$W^{\un I}_\de(X):=\left\{L\in W_\de(X)\mbox{ s.t. }h^0(C_j,L_j)=0\mbox{ if }I_j=-\mbox{, and }h^0(C_j,L_j)>0\mbox{ if }I_j=+ \right\}.$$
Notice that if $I_j=+$ for every $j$, i.e. $\un I=(+,\ldots,+)$, we get $W_\de^{+}(X)$.
Let us fix some vector $\un I\neq (+,\ldots,+)$; set $$I^+:=\{j\in\{1,\ldots,\ga\},\,I_j=+\},$$ and $$I^-:=\{h\in\{1,\ldots,\ga\},\,I_h=-\}.$$
We denote by $p^{j}_h$ the branch on $C_{j}$ of the point of intersection $C_{j}\cap C_h$, for $j\in I^+$, and some $h\in I^-$, if it exists. Moreover, we fix $j\in I^+$ and consider the disconnected curve $\ov{X\setminus C_j}=X^j_1\sqcup\cdots\sqcup X^j_{k_j}$. We observe that $C_j$ has only one point of intersection with each $X^j_l$, for $l=1,\ldots,k_j$. We denote the branches of this point on $C_j$ and $X^l_j$ resp. by $p^j_l$ and $p^l_j$. If $(L_1,\ldots,L_\ga)$ are the restrictions of a line bundle $L$ on $X$ to each irreducible component of $X$, we denote by $L_{X^l_j}$ the restriction of $L$ to the connected component $X^l_j$.
Set:
$$\Li_j:=\left\{l\in\{1,\ldots,k_j\},\phantom{\dagger}h^0(L_{X^l_j})=0\right\},$$
and let

$$\La_{j}:=\{L_j\in W_{d_j}(C_j),\,h^0(L_j(-\sum_{l\in\Li_j}p^j_l))> 0\}.$$

Now, still for $j\in I^+$, consider the set:
\begin{equation}\label{formula}
\widetilde\Sigma_j:=\left(\prod_{h\in I^-}(\Pic^{d_h}C_h\setminus W_{d_h}(C_h))\times\La_{j}\times\prod_{l\in I^+,l\neq j}W_{d_l}(C_{l})\right)\subset\prod_{i=1}^\ga\Pic^{d_i}C_i.
\end{equation}
and denote by $\Sigma_j$ the set obtained from $\widetilde\Sigma_j$ by reordering the factors in such a way that the final order in $\Sigma_j$ corresponds to the order of the components of $\un I$, so for example, $\La_j$ will be the factor in the position of $j$ in $\un I$.
We will denote by
\begin{equation}\label{totalset}
\Sigma_{\un I}=\bigcup\limits_{j\in I^+}\Sigma_j.
\end{equation}
We observe that $\La_j$ is irreducible (see \ref{compact2}), and its dimension depends on the cardinality $\la_j$ of $\Li_j$. Indeed $\dim(\La_j)=d_j-\la_j$ and $0\leq\la_j\leq k_j$. It follows that $\Sigma_j$ is irreducible for every $j\in I^+$.
\begin{lemma}
We have that $(\nu^*)^{-1}(\Sigma_{\un I})=W^{\un I}_\de(X)$.
\end{lemma}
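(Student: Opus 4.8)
The plan is to use that $\nu^*\colon\Pic^\de X\to\prod_{i=1}^\ga\Pic^{d_i}C_i$ is an isomorphism, so it suffices to prove the set-theoretic identity $\nu^*\big(W^{\un I}_\de(X)\big)=\Sigma_{\un I}$ inside $\prod_{i=1}^\ga\Pic^{d_i}C_i$; equivalently, to characterize in terms of the multidegree data $(L_1,\dots,L_\ga)=\nu^*(L)$ exactly when $L\in W^{\un I}_\de(X)$. Unwinding $\Sigma_{\un I}=\bigcup_{j\in I^+}\Sigma_j$, the point $(L_1,\dots,L_\ga)$ lies in $\Sigma_{\un I}$ iff $h^0(C_h,L_h)=0$ for all $h\in I^-$, $h^0(C_l,L_l)>0$ for all $l\in I^+$, and there is some $j\in I^+$ with $h^0\big(C_j,L_j(-\sum_{l\in\Li_j}p^j_l)\big)>0$ (the first two families of conditions are the factors $\Pic^{d_h}C_h\setminus W_{d_h}(C_h)$ and $W_{d_l}(C_l)$ of $\widetilde\Sigma_j$, the last is the extra condition defining $\La_j$; recall that $\Li_j$, hence $\La_j$, depends on $L$). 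On the other hand $L\in W^{\un I}_\de(X)$ says precisely that the same vanishing/non-vanishing holds on the $C_i$ and, in addition, $h^0(X,L)>0$. Thus everything reduces to the equivalence, for a line bundle $L$ on a connected curve $X$ of compact type,
\[
h^0(X,L)>0\ \Longleftrightarrow\ \exists\,j\ \text{with}\ h^0\big(C_j,L_j(-{\textstyle\sum_{l\in\Li_j}}p^j_l)\big)>0 ,
\]
where $\Li_j=\{l\colon h^0(X^j_l,L|_{X^j_l})=0\}$.

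For ``$\Rightarrow$'' take $0\neq s\in H^0(X,L)$ and pick a component $C_j$ with $s_j:=s|_{C_j}\neq 0$; then $h^0(C_j,L_j)>0$, so $j\in I^+$. For $l\in\Li_j$ we have $h^0(X^j_l,L|_{X^j_l})=0$, hence $s$ vanishes on $X^j_l$ and so $s_j(p^j_l)=0$; therefore $0\neq s_j\in H^0\big(C_j,L_j(-\sum_{l\in\Li_j}p^j_l)\big)$. This also gives $L_j\in\La_j$, and the remaining factors of $\Sigma_j$ are satisfied by the vanishing data, so $\nu^*(L)\in\Sigma_j$; this is the inclusion $W^{\un I}_\de(X)\subseteq(\nu^*)^{-1}(\Sigma_{\un I})$.

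For ``$\Leftarrow$'' I would induct on the number $\ga$ of components ($\ga=1$ is trivial, since then $\Li_1=\emptyset$). Fix $j$ and $0\neq s_j\in H^0\big(C_j,L_j(-\sum_{l\in\Li_j}p^j_l)\big)$. If $s_j(p^j_l)=0$ for every $l\in\{1,\dots,k_j\}$ (not only for $l\in\Li_j$), then extending $s_j$ by the zero section on each $X^j_l$ glues to a nonzero section of $L$ on $X$, so $h^0(X,L)>0$. Otherwise there is $l_0\notin\Li_j$ with $s_j(p^j_{l_0})\neq 0$, whence $h^0(X^j_{l_0},L|_{X^j_{l_0}})>0$. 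Write $X=X'\cup X^j_{l_0}$ with $X'=\ov{X\setminus X^j_{l_0}}$: this $X'$ is connected of compact type, contains $C_j$, has fewer components, and meets $X^j_{l_0}$ in a single node; moreover the complement of $C_j$ in $X'$ is $\bigcup_{l\neq l_0}X^j_l$, and since $l_0\notin\Li_j$ the set $\Li_j$ computed in $X'$ is again $\Li_j$. Hence the same $s_j$ verifies the hypothesis of the inductive statement for $X'$ with centre $j$, giving $h^0(X',L|_{X'})>0$; combined with $h^0(X^j_{l_0},L|_{X^j_{l_0}})>0$ and the fact that two connected subcurves meeting in one node, each carrying a section, force a section on the union (\cite[2.1.1]{bib:theta}), we get $h^0(X,L)>0$. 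Together with the vanishing/non-vanishing read off the factors of $\Sigma_j$, this yields $\nu^*(L)\in\Sigma_{\un I}\Rightarrow L\in W^{\un I}_\de(X)$, which finishes the argument.

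The bookkeeping identifying the factors of $\widetilde\Sigma_j$ with the conditions on the $C_i$, and the base case, are routine. The real difficulty, and the step I expect to be the main obstacle, is ``$\Leftarrow$'': producing a global section from the single datum $0\neq s_j\in H^0(C_j,L_j(-\sum_{l\in\Li_j}p^j_l))$. The section $s_j$ itself need not extend, since at a node $C_j\cap X^j_l$ with $l\notin\Li_j$ the branch on $X^j_l$ may be a base point of $L|_{X^j_l}$ while $s_j$ is nonzero there; the induction above is tailored to avoid this by peeling off one ``positive'' subtree $X^j_{l_0}$ at a time, using crucially that this operation does not alter $\Li_j$, so that the equivalence descends to $X'$.
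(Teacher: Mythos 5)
Your proof is correct, and it is worth setting against the paper's own, which has the same overall shape (use the isomorphism $\nu^*$ to reduce everything to deciding when the componentwise data $(L_1,\ldots,L_\ga)$ correspond to a bundle with $h^0(X,L)>0$) but is much terser at precisely the two points you elaborate. For the inclusion $W^{\un I}_\de(X)\subseteq(\nu^*)^{-1}(\Sigma_{\un I})$, the paper picks an arbitrary $i\in I^+$ with $\Li_i\neq\emptyset$ and asserts that gluing forces $L_i\in\La_i$; as written this is only clear when the global section is nonzero on $C_i$, and your device of choosing $j$ to be a component on which $s$ does not vanish identically is exactly what makes the argument airtight (and it is all that is needed, since $\Sigma_{\un I}$ is a union over $j\in I^+$). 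For the reverse inclusion the paper simply declares that a point of $(\nu^*)^{-1}(\Sigma_{\un I})$ ``must have at least a nonzero section''; you rightly observe that this is the genuinely nontrivial direction --- $s_j$ itself need not extend across a node $p^j_l$ with $l\notin\Li_j$ if the opposite branch is a base point of $L|_{X^j_l}$, and the crude estimate $h^0(X,L)\geq\sum h^0 - \sharp(\mbox{nodes})$ is not enough once $\Li_j\neq\emptyset$ --- and your induction peeling off one positive subtree at a time, combined with the one-node gluing statement from \cite[2.1.1]{bib:theta} and the fact that discarding $X^j_{l_0}$ does not change $\Li_j$, genuinely supplies the missing argument. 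The only cost is length; the benefit is that both implications are actually proved.
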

\begin{proof}
Inclusion ($\subset$) is easy by definition of $\Sigma_{\un I}$, since an element of $(\nu^*)^{-1}(\Sigma_{\un I})$ must have at least a nonzero section. On the other hand, given a line bundle $L\in W^{\un I}_\de(X)$, we want to prove that $\nu^*(L)=(L_1,\ldots,L_\ga)$ belongs to $\Sigma_{\un I}$. If for every $i\in I^+$, $h^0(L_{X^l_i})\neq 0$ for every $l=1,\ldots,k_i$, then $\Li_i=\emptyset$ and $\La_i=W_{d_i}(C_i)$ for every $i$, hence in this case $$\Sigma=\prod_{h\in I^-}(\Pic^{d_h}C_h\setminus W_{d_h}(C_h))\times\prod_{l\in I^+}W_{d_l}(C_{l})$$ up to reordering the factors in the left hand side, and therefore $\nu^*(L)\in\Sigma$. Now, assume that there exists $i\in I^+$ such that $\Li_i\neq\emptyset$. Without loss of generality we can assume that $|\Li_i|=1$. Then in order to glue the sections and get a line bundle on $X$, it must be $h^0(L_i(-p^i_l))> 0$, hence $L_i\in\La_i$, and therefore $$(L_1,\ldots,L_\ga)\in\Sigma_j\subset\Sigma_{\un I}.$$
\end{proof}

Even if we can't say precisely which is the dimension of the components of $W^{\un I}_\de(X)$, we can count how many they are. By (\ref{totalset}) we see that for any fixed $\un I$, the number of irreducible components of $W^{\un I}_\de(X)$ is $|I^+|$. Hence we can say that the number of irreducible components of $W_\de(X)$ is \begin{equation}\label{numcomp}
\mathcal N:=\left(1+\sum_{\un I\neq(+,\ldots,+)}|I^+|\right).
\end{equation}
\begin{remark}
{\rm We notice that depending on $\de$, some $\un I$'s won't appear in (\ref{numcomp}); indeed, if there exists some $k\in\{1,\ldots,\ga\}$ such that $d_k\geq g_k$, then the component $I_k$ of $\un I$ must be $+$, so we will have a small number of $\un I$'s, and hence a small number of irreducible components in $W_\de(X)$. Moreover, if $d_k\geq g_k$ for every $k$, we get that the only irreducible component of $W_\de(X)$ is $W_\de^{+}(X)$.}
\end{remark}

\subsection{Binary curves}
A binary curve of genus $g$ is a nodal curve made of two smooth rational components intersecting at $g+1$ points. We are going to recall some properties that we will use throughout this paragraph. If $X$ is a binary curve of genus $g\geq-1$, a multidegree $\de=(d_1,d_2)$ such that $|\de|=d$, is balanced on $X$ if
\begin{equation}
m(d,g):=\dfrac{d-g-1}2\leq d_i\leq \dfrac{d+g+1}2=:M(d,g)
\end{equation}

We say that $\de$ is strictly balanced if strict inequality holds. If $\wh X_S$ is a quasistable curve obtained from a binary curve $X$ by blowing up the nodes in $S$, then we call $E_1,\ldots,E_{\sharp S}$ the exceptional components, so that if $X_S$ is the partial normalization of $X$ at the nodes in $S$, we have that $\wh X_S=X_S\cup \cup_{i=1}^{\sharp S}E_i$.

\begin{defi}
{\rm A multidegree $\wh\de=(d_1,\ldots,d_{2+\sharp S})$ on $\wh X_S$ with $|\wh\de|=d$, is balanced if the following hold:\\
(1) $d_i=1$ for any $i=3,\ldots,\sharp S$, i.e. $\wh\de |_{E_i}=1,\forall i$.\\
(2) $\wh\de|_{X_S}$ is balanced on $X_S$.\\
$\wh\de$ is strictly balanced if its restriction to $X_S$ is strictly balanced on $X_S$.}
\end{defi}

\begin{remark}\label{balanceNorm}
{\rm Let $X$ be a binary curve of genus $g$, and let $X_n$ be the normalization of $X$ at the node $n$,
such that $\nu:X_n\rightarrow X$ is the associated map. Let $\de\geq\un 0$ be a balanced multidegree on $X$ such that $|\de|=d\leq g-1$, then it is still balanced on $X_n$. Indeed, let us suppose by contradiction that
$$d_1<m(d,g-1);$$ then it should be
$$d_1<\dfrac{d-g}2\leq\dfrac{g-1-g}2,$$ but then we would have that $d_1<0$, which cannot happen.}
\end{remark}

\begin{lemma}\label{Lucia}
Let $X$ be a quasistable curve, and $L\in\Pic^d X$ a balanced line bundle such that $\un{\deg}L=:\de$ with $d\leq g-1$ and $h^0(L)\geq 1$.
Then there exists a non exceptional irreducible component $C$ of $X$ such that for general $p\in C$
$$h^0(L(p))=h^0(L).$$
\end{lemma}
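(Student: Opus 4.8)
The plan is to reformulate the conclusion in terms of the adjoint linear system $|\omega_X\otimes L^{-1}|$ and then to read off what is needed from a degree count on the exceptional components. The first step is the reformulation. For a non-exceptional component $C$ of $X$ and a general (hence smooth) point $p\in C$, the exact sequence $0\to L\to L(p)\to k_p\to 0$ shows $h^0(L(p))\in\{h^0(L),\,h^0(L)+1\}$, and that $h^0(L(p))=h^0(L)$ holds exactly when the connecting map $k_p\to H^1(X,L)$ is nonzero. Since a nodal curve is Gorenstein, Serre duality turns this into the statement that some section of $\omega_X\otimes L^{-1}$ does not vanish at $p$; letting $p$ range over $C$, the condition ``$h^0(L(p))=h^0(L)$ for general $p\in C$'' becomes equivalent to ``$C$ is not contained in the base locus of $|\omega_X\otimes L^{-1}|$'', i.e.\ to the existence of a global section of $\omega_X\otimes L^{-1}$ that is not identically zero on $C$. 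So it suffices to exhibit a non-exceptional component on which some section of $\omega_X\otimes L^{-1}$ does not vanish.

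The second step is to check $h^1(L)\neq 0$. Since $X$ is quasistable it is connected of arithmetic genus $g$, so Riemann--Roch gives $h^0(L)-h^1(L)=d-g+1\le 0$ because $d\le g-1$; together with $h^0(L)\ge 1$ this forces $h^1(L)\ge g-d\ge 1$, hence $h^0(\omega_X\otimes L^{-1})=h^1(L)>0$.

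The third step is to see that a nonzero section of $\omega_X\otimes L^{-1}$ must be supported on the non-exceptional part. Pick $\sigma\in H^0(\omega_X\otimes L^{-1})$, $\sigma\neq 0$. On an exceptional component $E\cong\bP^1$ one has $\deg(\omega_X\otimes L^{-1})|_E=(2g_E-2+\delta_E)-\deg L|_E=(0-2+2)-1=-1$, because $L$ is balanced and hence $L|_E=\Oc_E(1)$; thus $(\omega_X\otimes L^{-1})|_E\cong\Oc_{\bP^1}(-1)$ has no nonzero section and $\sigma|_E=0$ for every exceptional $E$. If in addition $\sigma$ vanished identically on every non-exceptional component, then it would restrict to $0$ on every irreducible component of $X$; but a section of a line bundle on $X$ that restricts to zero on both branches at each node and on every component is the zero section, contradicting $\sigma\neq 0$. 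Hence $\sigma$ is not identically zero on some non-exceptional component $C$, and by the first step this $C$ satisfies $h^0(L(p))=h^0(L)$ for general $p\in C$.

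The only delicate point is the Serre-duality identification in the first step, namely that vanishing of the connecting homomorphism $k_p\to H^1(L)$ is dual to $p$ being a base point of $|\omega_X\otimes L^{-1}|$; once that is in place the rest is forced bookkeeping, and the balancedness hypothesis enters only through $L|_E=\Oc_E(1)$, which is exactly what makes $\omega_X\otimes L^{-1}$ have negative degree on every exceptional component.
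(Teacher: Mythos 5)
Your proof is correct and follows essentially the same route as the paper's: both translate the condition $h^0(L(p))=h^0(L)$ via Riemann--Roch/Serre duality into $p$ not being a base point of $\omega_X\otimes L^{-1}$, use $d\le g-1$ and $h^0(L)\ge 1$ to get $h^0(\omega_X\otimes L^{-1})>0$, and observe that $\deg_E(\omega_X\otimes L^{-1})=-1$ on every exceptional component forces a nonzero section to survive on some non-exceptional component. Your version merely spells out the connecting-homomorphism justification of the duality step that the paper leaves implicit.
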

\begin{proof}
We fix a smooth point $p$ on $X$. We know that $h^0(L(p))\geq h^0(L)$. We suppose that $h^0(L(p))= h^0(L)+1$; by Riemann-Roch this is equivalent to saying that $h^0(\omega_X\otimes L^{-1}(-p))=h^0(\omega_X\otimes L^{-1})$. This holds if and only if $p$ is a base point of $\omega_X L^{-1}$. But now we notice that, again by Riemann-Roch theorem, $$h^0(\omega_X\otimes L^{-1})=h^0(L)+2g-2-d-g+1\geq h^0(L)\geq 1.$$ Therefore $\omega_X\otimes L^{-1}$ has some non vanishing section on $X$. If $E\subset X$ is an exceptional component, then $\deg_E\omega_X=0$ and $\deg_E L=1$, hence $\deg_E\omega_X\otimes L^{-1}=-1$, hence every section of $\omega_X\otimes L^{-1}$ vanishes on $E$. This implies that there must be a non exceptional component $C$ of $X$ such that the restriction to $C$ of $H^0(\omega_X\otimes L^{-1})$ is non zero.
Hence the general point $p\in C$ is not a base point of $\omega_X\otimes L^{-1}$.
So we get our conclusions.
\end{proof}

\begin{remark}\label{binary(g-1)}
{\rm We recall that if $X$ is a nodal curve and $\de=\un{g-1}$ is \emph{stably balanced} as in section 1.3.1 in \cite{bib:theta}, then $W_\de(X)=A_\de(X)$. It's very easy to see that if $X$ is a binary curve and $\de=\un{g-1}\geq\un 0$ is balanced, then $\de$ is strictly balanced and hence stably balanced. This implies that if $X$ is a binary curve of genus $g$ and $\de=\un{g-1}\geq\un 0$ balanced, then $W_\de(X)=A_\de(X)$.}
\end{remark}

\begin{lemma}\label{balanced+1}
Let $\de\in\ov{B_d(X)}$ be such that $W_\de(X)\neq\emptyset$ where $X$ is binary of genus $g$ and $d\leq g-1$. Then $\de\geq\un 0$ and $\de\in B_d(X)$.
\end{lemma}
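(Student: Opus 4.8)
The plan is to read the statement off directly from the combinatorics of sections on a binary curve. Write $X=C_1\cup C_2$ with $C_1,C_2\cong\bP^1$ meeting at the $g+1$ nodes $n_0,\dots,n_g$ of $X^{\rm sing}$, let $\nu$ be the normalization, and for $L\in\Pic^\de X$ put $L_i:=L|_{C_i}\cong\Oc_{\bP^1}(d_i)$. The one structural input I need is the normalization sequence $0\to L\to\nu_*\nu^*L\to\bigoplus_{i=0}^{g}k_{n_i}\to 0$, which identifies $H^0(X,L)$ with the pairs $(s_1,s_2)\in H^0(C_1,L_1)\oplus H^0(C_2,L_2)$ whose values agree at $n_0,\dots,n_g$. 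In particular, a nonzero section whose restriction to $C_1$ vanishes must restrict on $C_2$ to a nonzero section of $\Oc_{\bP^1}(d_2)$ vanishing at the $g+1$ \emph{distinct} points of $C_2$ over the nodes, forcing $d_2\ge g+1$. Everything else is elementary degree bookkeeping on $\bP^1$ together with the balancing inequalities $m(d,g)\le d_i\le M(d,g)$ and the hypothesis $d\le g-1$.

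First I would prove $\de\ge\un 0$. Fix $L\in W_\de(X)$ and a nonzero $s\in H^0(X,L)$. Suppose, say, $d_1<0$. Then $H^0(C_1,L_1)=0$, so $s$ restricts to $0$ on $C_1$, and by the observation above $d_2\ge g+1$. But $\de$ balanced gives $d_2\le M(d,g)=(d+g+1)/2$, whence $g+1\le (d+g+1)/2$, i.e.\ $d\ge g+1$, contradicting $d\le g-1$. Hence $d_1\ge 0$, and symmetrically $d_2\ge 0$.

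Next I would upgrade ``balanced'' to ``strictly balanced''. Since $\de\in\ov{B_d(X)}$, failure of strict balance means some $d_i$ equals $m(d,g)=(d-g-1)/2$ or $M(d,g)=(d+g+1)/2$. In the first case, $d\le g-1$ forces $d_i\le (d-g-1)/2\le -1<0$; in the second case, the complementary component equals $d-M(d,g)=m(d,g)\le -1<0$. Either way this contradicts the inequality $\de\ge\un 0$ just established, so $\de\in B_d(X)$.

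I do not expect a genuine obstacle here: the content of the lemma is simply that on a binary curve effectivity cannot be ``concentrated on one component'' once $d\le g-1$, because forcing a section to vanish at all $g+1$ nodes costs degree $g+1$, which the balancing inequalities forbid. The only point requiring a little care is the identification of $H^0(X,L)$ with matching pairs of sections, and within it the fact that the $g+1$ nodes pull back to $g+1$ distinct points of each rational component — exactly the input that makes ``a nonzero section vanishing at all of them has degree $\ge g+1$'' legitimate. One could also avoid the normalization sequence and argue with Riemann--Roch on $X$, but the bare-hands version above is shorter.
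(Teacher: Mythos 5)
Your proof is correct. The second half (upgrading balanced to strictly balanced) is exactly the paper's argument: $m(d,g)=(d-g-1)/2\le -1$ when $d\le g-1$, so no nonnegative $d_i$ can hit the lower bound, and hitting the upper bound would force the other component to hit the lower one. The only real divergence is in the first half: where you prove $\de\ge\un 0$ by hand via the normalization sequence (a section vanishing on $C_1$ must vanish at the $g+1$ distinct branch points on $C_2$, forcing $d_2\ge g+1$, which the balancing bound $d_2\le (d+g+1)/2$ and $d\le g-1$ forbid), the paper simply cites Proposition 12 of Caporaso's \emph{Brill--Noether theory of binary curves}, which gives $W_\de(X)=\emptyset$ whenever some $d_i<0$ and $d\le g$. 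Your version is self-contained and uses the balancedness hypothesis where the cited result does not need it; both are legitimate here since balancedness is assumed in the lemma. The one point you rightly flag --- that the nodes pull back to $g+1$ \emph{distinct} points on each rational component --- is guaranteed by the definition of a binary curve (two smooth components meeting nodally at $g+1$ points), so there is no gap.
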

\begin{proof}
By \cite{bib:binary}[Proposition 12] if $d_i<0$ and $d\leq g$ we have $W_\de(X)=\emptyset$, hence $\de\geq\un 0$. Now we have
$$m(d,g)=\dfrac{d-g-1}2\leq\dfrac{g-1-g-1}2=-1. $$ Therefore, if $\de\geq\un 0$, $d_i\neq m(d,g)$ for $i=1,2$. Hence $\de$ is strictly balanced on $X$.
\end{proof}

We notice that by lemma \ref{balanced+1}, for a binary curve we have $W_\de^+(X)=W_\de(X)$.

\begin{prop}\label{closure}
Let $X=C_1\cup C_2$ be a binary curve of genus $g$, $L$ a line bundle on $X$ of degree $\de$ balanced, with $0<|\de|\leq g-1$, and $h^0(X,L)>0$. Then there exists a family $L_t\in\mathrm{Im}\alpha_X^\de$ such that $L_t\rightarrow L$ when $t\rightarrow 0$.
\end{prop}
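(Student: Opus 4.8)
\emph{Plan.} The plan is to pull $L$ back to the normalization, pick a global section, and deform it so as to push its zero divisor off the nodes while steering the gluing parameters to converge to those of $L$ itself. Concretely, let $\nu\colon C_1\sqcup C_2\to X$ be the normalization at all $g+1$ nodes, with branches $a_0,\dots,a_g\in C_1$ and $b_0,\dots,b_g\in C_2$; then $L$ is determined by $\nu^*L=(\Oc_{C_1}(d_1),\Oc_{C_2}(d_2))$ together with the gluing data $\lambda_j\in k^*$ at the $j$-th node. By Lemma~\ref{balanced+1} we have $\de\geq\un 0$, and since $\de$ is balanced with $|\de|=d\leq g-1$ we get $d_i\leq M(d,g)=\tfrac{d+g+1}2\leq g$. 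I would take $0\neq\sigma\in H^0(X,L)$ and write $\nu^*\sigma=(s_1,s_2)$ with $s_i\in H^0(C_i,\Oc_{C_i}(d_i))$ and $s_1(a_j)=\lambda_j\,s_2(b_j)$ for all $j$. First note that neither $s_1$ nor $s_2$ is identically zero: if e.g. $s_1\equiv0$, then $s_2$ vanishes at the $g+1$ distinct points $b_j$, forcing $d_2\geq g+1$, against $d_2\leq g$. Put $T:=\{\,j:\,s_1(a_j)=0\,\}=\{\,j:\,s_2(b_j)=0\,\}$, and for $j\in T$ let $m_j,m'_j\geq1$ be the vanishing orders of $s_1$ at $a_j$ and of $s_2$ at $b_j$. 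If $T=\emptyset$ then $\mathrm{div}(\sigma)\subset\dot X$, so $L\in\IM\al_X^\de$ and the constant family $L_t\equiv L$ works.

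If $T\neq\emptyset$, the main step is to construct the deforming family. For $t$ near $0$, let $s_{t,1}$ be obtained from $s_1$ by replacing, for each $j\in T$, the factor $(x-a_j)^{m_j}$ by $\prod_{i=1}^{m_j}(x-a_j-r^j_i(t))$, and similarly $s_{t,2}$ by replacing $(y-b_j)^{m'_j}$ by $\prod_{i=1}^{m'_j}(y-b_j-\rho^j_i(t))$, where $r^j_i(t),\rho^j_i(t)\to0$ as $t\to0$. Define $L_t$ (for $t\neq0$) to be the line bundle on $X$ with $\nu^*L_t=(\Oc_{C_1}(d_1),\Oc_{C_2}(d_2))$ and gluing data $\lambda_j(t):=s_{t,1}(a_j)/s_{t,2}(b_j)$, so that $(s_{t,1},s_{t,2})$ descends to a section $\sigma_t$ of $L_t$. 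For $t\neq0$ none of the $a_j,b_j$ is a zero of $s_{t,1},s_{t,2}$, hence $\mathrm{div}(\sigma_t)\subset\dot X$ and $L_t\in\IM\al_X^\de$. It then remains to show $L_t\to L$, i.e. $\lambda_j(t)\to\lambda_j$ for every $j$. For $j\notin T$ this is automatic, since $s_{t,i}$ depends continuously on $t$ and $s_1(a_j)/s_2(b_j)=\lambda_j$. For $j\in T$ one computes $s_{t,1}(a_j)=\big(A_j+o(1)\big)\prod_i(-r^j_i(t))$ and $s_{t,2}(b_j)=\big(B_j+o(1)\big)\prod_i(-\rho^j_i(t))$ with $A_j,B_j\in k^*$ independent of $t$; since $m_j,m'_j\geq1$, one may choose the $r^j_i(t),\rho^j_i(t)$ so that $\prod_i r^j_i(t)$ and $\prod_i\rho^j_i(t)$ are monomials in $t$ of equal order, with leading coefficients chosen so that $\lambda_j(t)\to\lambda_j$. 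This produces a morphism $t\mapsto L_t$ from a neighbourhood of $0$ with $L_0=L$ and $L_t\in\IM\al_X^\de$ for $t\neq0$, which is the assertion.

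\emph{Main obstacle.} The delicate point is exactly the convergence of the gluing data: the fibre of $\nu^*$ through $\nu^*L$ is a copy of $k^*$ inside the torus $\Pic^\de X$, so merely pushing the zeros of $\sigma$ off the nodes only produces \emph{some} limit in that fibre; what pins the limit to $L$ is the freedom in the \emph{rates} (the precise orders and leading coefficients, not just the vanishing, of the $r^j_i(t),\rho^j_i(t)$), and the bookkeeping heart of the argument is to arrange this simultaneously at all nodes of $T$ and for arbitrary multiplicities $m_j,m'_j$. A more ``synthetic'' alternative, closer to the paper's other arguments, would be to imitate the proof of Theorem~\ref{Ad(X)}: normalize a single node, use Remark~\ref{balanceNorm} to keep $\de$ balanced and Remark~\ref{binary(g-1)} as the base of an induction on the genus, and rerun the case analysis (1)--(5) there with $W_d(X_n)$ replaced by $W_\de(X_n)$; on that route the subtle step becomes the birationality of $\rho|_{\ov{A_\de(X)}}$ onto $W_\de(X_n)$, which forces the non-generic fibres of $\rho$ to lie in $\ov{A_\de(X)}$.
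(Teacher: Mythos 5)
Your argument is correct, but it takes a genuinely different route from the paper. The paper proves this by a double induction: on the degree $d$ (with base case $d=g-1$ imported from the theta-divisor results of \cite{bib:theta} via Remark~\ref{binary(g-1)}, and the inductive step driven by Lemma~\ref{Lucia}, which produces a point $p$ with $h^0(L(p))=h^0(L)$), combined with a subsidiary induction on the number of nodes to control whether the approximating points $a^i_t$ actually specialize to $p$, using Lemmas 2.2.3 and 2.2.4 of \cite{bib:theta} to descend from the partial normalization. You instead exploit the defining feature of binary curves head-on: since both components are rational, $\Pic^{d_i}C_i$ is a point, a section of $L$ is an explicit pair of binary forms $(s_1,s_2)$ subject to the gluing conditions $s_1(a_j)=\la_j s_2(b_j)$, and you deform the zeros of $s_1,s_2$ off the branch points while tuning the orders and leading coefficients of the perturbations $r^j_i(t),\rho^j_i(t)$ so that the induced gluing data $\la_j(t)=s_{t,1}(a_j)/s_{t,2}(b_j)$ converge to the prescribed $\la_j$. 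You correctly identify and resolve the only delicate point: at a node $j\in T$ both sides of the gluing condition vanish, so $\si$ alone does not see $\la_j$, and one must pin the limit inside the $k^*$-fibre of $\nu^*$ by matching the vanishing orders (e.g.\ $\prod_i r^j_i(t)$ and $\prod_i\rho^j_i(t)$ monomials of equal order, leading coefficients adjusted so that $A_j\kappa_j/B_j=\la_j$); the nodes of $T$ decouple since perturbations at $j'$ enter $\la_j(t)$ only through terms with nonzero limit. You also correctly use balancedness ($d_i\le M(d,g)\le g$) to rule out $s_i\equiv 0$, and Lemma~\ref{balanced+1} for $\de\ge\un 0$. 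What your approach buys is a self-contained, non-inductive proof that avoids the $d=g-1$ input and the lemmas of \cite{bib:theta} entirely; what it gives up is generality, since it leans irreducibly on the rationality of the components (explicit polynomial sections with freely movable zeros), whereas the paper's inductive scheme is modeled on the irreducible case and is the template the author reuses for Theorem~\ref{totalclosure}. The only loose end, which is cosmetic, is that matching the orders $m_j$ and $m'_j$ may require a base change $t\mapsto t^N$ to keep the family algebraic; this does not affect the conclusion that $L\in A_\de(X)$.
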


\begin{proof}
Let $L$ be a line bundle as in the hypothesis; we will use induction on the degree.

If $d=g-1$ by \cite{bib:theta} (see remark \ref{binary(g-1)}) we have that $A_\de(X)=W_\de(X)$.

Now let $d<g-1$; by lemma \ref{Lucia} we have that there exists a component of $X$, say $C_1$, such that for the general $p\in C_1$ we have that $h^0(L(p))=h^0(L)$. By lemma \ref{balanced+1} $L(p)$ has balanced multidegree on $X$. Hence we can apply induction and get that there exists a family $L'_t\in\mathrm{Im}\alpha_X^{\un{d+1}}$ such that $L'_t\rightarrow L(p)$. Like before we denote this family via
\begin{equation}\label{family2}
\mathcal O_X(a^1_t+\cdots+a^{d+1}_t)\rightarrow L(p).
\end{equation}
We notice that $p$ is a base point of $L(p)$. Let $\nu:X_n\rightarrow X$ be the normalization of $X$ at a node $n$, as in remark \ref{balanceNorm}. Then we can pullback (\ref{family2}) to $X_n$ and get
\begin{equation}\label{family3}
\mathcal O_{X_n}(a^1_t+\cdots+a^{d+1}_t)\rightarrow L'(p),
\end{equation}
where with abuse of notation we call the points on $X$ and $X_n$ in the same way, and $L'=\nu^*(L)$.

Now we divide the proof in two cases:
\begin{itemize}
\item[{\bf Case 1}]: we assume that $h^0(L'(p))=h^0(L')$.

We need to use a second induction on the number of nodes. The inductive statement is: if $\wt L$ and $\wt L(p)$ are balanced line bundles on $Y$ binary curve with $\delta $ nodes, with $\un\deg\wt L=\de\geq\un 0$ with $h^0(\wt L)>0$, $h^0(\wt L(p))=h^0(\wt L)$ and there exists $O_{Y}(a^1_t+\cdots+a^{d+1}_t)\rightarrow \wt L(p)$, then $a^i_t\rightarrow p$ for some $i$.

The base of induction is obvious on a curve with no nodes, i.e. a smooth one. So we suppose that the statement above is true for $X_n$: in particular we know that $h^0(L'(p))=h^0(L')$; then by induction it holds that in (\ref{family3}) there exists $a^i_t$ such that
\begin{equation}\label{points}
a^i_t\rightarrow p \mbox{ for some }i.
\end{equation}
Up to reordering the points we can assume that $i=d+1$. Now, by applying (\ref{points}) to (\ref{family2}) we get that $$\mathcal O_X(a^1_t+\cdots+a^{d}_t)\rightarrow L,$$
and hence the conclusions in case 1.

\item[{\bf Case 2}]: we assume that $h^0(L'(p))=h^0(L')+1$. Then we have $h^0(L')=h^0(L)$. We have two possibilities: by applying Lemma 2.2.3 (2) and Lemma 2.2.4 (2) in \cite{bib:theta}, either $n$ is a base point of $L$, or $W_{L'}(X)=\{L\}$. In the first case we have that it must be true regardless of the choice of $n$, i.e. every node $n$ of $X$ must be a base point of $L$, which is impossible since the nodes are $g+1$ whereas the degree of $L$ is $d<g-1$.

    On the other hand, if $W_{L'}(X)=\{L\}$ we need a new inductive argument on the number of nodes. In this case the inductive statement is: let $Y$ is a binary curve of genus $g$, $M\in\Pic^\de Y$ such that $\de$ is balanced and $d\leq g-1$ with $h^0(Y,M)>0$. Then there exists $M_t\in\mathrm{Im}\alpha_Y^\de$ such that $M_t\rightarrow M$ when $t\rightarrow 0$.

The base of induction is given by a binary curve of genus $2$, i.e. with $3$ nodes, so that $d=1$, and since $d=g-1$, by \cite{bib:theta} we have $W_\de(X)=A_\de(X)$, hence the conclusion holds.

We assume the inductive statement for $X_n$, so we get that there exists $L'_t\in\mathrm{Im}\alpha_{X_n}^\de$ such that $L'_t\rightarrow L'$. Since $L'_t\in\mathrm{Im}\alpha_{X_n}^\de$, for every $t$ there exists $L_t\in\IM\alpha_X^\de$ such that $\nu^*(L_t)=L'_t$. By the fact that $W_{L'}(X)=\{L\}$, we conclude that $L_t\rightarrow L$.
\end{itemize}
\end{proof}

\begin{cor}
Let $X$ be a binary curve of genus $g$, and let $\de\geq\un 0$ be a balanced multidegree on $X$. Then $W_\de(X)=A_\de(X)$. In particular $W_\de(X)$ is irreducible of dimension $d$.
\end{cor}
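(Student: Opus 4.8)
The plan is to obtain the equality $W_\de(X)=A_\de(X)$ by assembling Lemma~\ref{inclusion}, the observation recorded just after Lemma~\ref{balanced+1}, and Proposition~\ref{closure}, the last of which carries all the real weight; the irreducibility and dimension statements should then be essentially formal. I will tacitly work in the range $1\le d\le g-1$ in which all these tools apply (the case $d=0$ is degenerate, with $\dot X^\de$ and $W_\de(X)=A_\de(X)=\{\Oc_X\}$ both reduced to a point).

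For one inclusion I would note that Lemma~\ref{inclusion} gives $A_\de(X)\subseteq W_\de^{+}(X)$, and that for a binary curve the observation following Lemma~\ref{balanced+1} identifies $W_\de^{+}(X)$ with $W_\de(X)$; hence $A_\de(X)\subseteq W_\de(X)$. For the reverse inclusion I may assume $W_\de(X)\ne\emptyset$, in which case Lemma~\ref{balanced+1} forces $\de\ge\un 0$ and $\de\in B_d(X)$, so that $\de$ is balanced with $0<|\de|\le g-1$; then, given any $L\in W_\de(X)$, we have $h^0(X,L)>0$, and Proposition~\ref{closure} produces a family $L_t\in\IM\al_X^\de$ with $L_t\to L$ as $t\to0$, so $L\in A_\de(X)$. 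Combining the two inclusions yields $W_\de(X)=A_\de(X)$.

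The irreducibility is then automatic: $A_\de(X)=\ov{\IM\al_X^\de}$ is the closure of the image of the irreducible variety $\dot X^\de=\dot C_1^{d_1}\times\dot C_2^{d_2}$ under a morphism, hence irreducible, and so is $W_\de(X)$. For the dimension, $\dim A_\de(X)\le\dim\dot X^\de=d$ is clear, and the reverse inequality follows once I know $\al_X^\de$ is generically finite onto its image. This is the only point needing a genuine, though routine, argument, and it is where I expect the (mild) obstacle to lie: I would prove it by pulling a general effective divisor $D$ of multidegree $\de$ supported on $\dot X$ back to the normalization $\bP^1\sqcup\bP^1$, observing that a section of $\Oc_X(D)$ is a pair of sections of $\Oc_{\bP^1}(d_1)$ and $\Oc_{\bP^1}(d_2)$ agreeing at the $g+1$ nodes; evaluation at these $g+1$ points is injective on each $H^0(\Oc_{\bP^1}(d_i))$ because a nonzero form of degree $d_i\le g-1$ cannot vanish at $g+1$ distinct points, and, $D$ being general, the two images in $k^{g+1}$ meet only along the line spanned by the constant section since $d_1+d_2\le g$. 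Hence $h^0(\Oc_X(D))=1$ for general $D$, the Abel map is generically finite, and $\dim A_\de(X)=d$. As a fallback, one can instead imitate the induction in Theorem~\ref{Ad(X)}: the map $W_\de(X)\to W_\de(X_n)$ onto the binary curve $X_n$ of genus $g-1$ is dominant with general fibre a single point by \cite[Lemmas~2.2.3,~2.2.4]{bib:theta}, and the base case $d=g-1$ is Remark~\ref{binary(g-1)}.
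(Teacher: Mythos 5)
Your proof of the equality $W_\de(X)=A_\de(X)$ is exactly the paper's: both inclusions reduce to Lemma \ref{inclusion} (together with the observation that $W^+_\de(X)=W_\de(X)$ for binary curves, via Lemma \ref{balanced+1}) and to Proposition \ref{closure}, which carries all the weight; the irreducibility claim is likewise identical. The only divergence is the dimension statement. The paper simply cites \cite{bib:binary}[Proposition 25] for $\dim W_\de(X)=d$, whereas you sketch a self-contained generic-finiteness argument for $\al_X^\de$. Your sketch is the right idea, but its crux --- that for a \emph{general} effective $D$ of multidegree $\de$ supported on $\dot X$ the two evaluation subspaces $V_1,V_2\subset k^{g+1}$ meet only in the line spanned by the image of the canonical section --- is asserted rather than proved: $V_1$ is fixed while $V_2$ depends on $D$ only through the gluing constants $f_1(p_j)/f_2(q_j)$, and these are forced to make $V_1\cap V_2$ contain a common line, so the naive ``general position'' count does not apply directly and one must exhibit transversality (e.g.\ by producing a single $D$ with $h^0(\Oc_X(D))=1$ and invoking semicontinuity). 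That transversality is precisely the content of the cited Proposition 25, so you have not gained anything over the citation, but neither have you lost correctness provided you supply that one example or fall back on the reference. Your alternative fallback via the induction of Theorem \ref{Ad(X)} would also need care (the normalization $X_n$ has genus $g-1$, so for $d=g-1$ one is no longer in the range $d\le g(X_n)-1$), so as written the citation remains the cleanest route.
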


\begin{proof}
The first assertion is implied by proposition \ref{closure}. And of course this implies that $W_\de(X)$ is irreducible. By \cite{bib:binary}[proposition 25] we have that the dimension of $W_\de(X)=d$.
\end{proof}

So far we have studied the closure of $\mathrm{Im}\alpha_X^\de$ inside $\Pic^\de X$ when $X$ is a binary curve. The next step is to study its closure inside the compactified Picard variety $\Pbar$.

Let $B_d(X)$ be the set of strictly balanced line bundles of multidegree $\de$ on $X$, with $|\de|=d$, and denote by $\ov{B_d(X)}$ the set of balanced multidegrees. The stratification of $\Pbar$ as in \cite[Fact 2.2]{bib:Néron} is the following
\begin{equation}\label{strata}
\Pbar=\coprod_{\emptyset\subset S\subset X^{\rm sing} \atop \de\in B_{d}(\widehat X_S)}P_S^{\de}.
\end{equation}

For any set $S$ of nodes of $X$, if $C_1,C_2$ are the smooth components of $X$, $X_S=C_1\cup C_2$, with $\D_S=\sharp(C_1\cap C_2)=\D-\sharp S$, so that the total normalization is

$$C_1\sqcup C_2\stackrel{\nu_S}{\longrightarrow}X_S,$$
and given $L\in W_{\de_S}(X_S)$, we denote $(L_1,L_2):=\nu_S^*(L)$. The stratification in (\ref{strata}) motivates the definition of
\begin{equation}\label{W+}
\wt W_d(X):=\bigsqcup\limits_{\emptyset\subset S\subset X^{\rm sing} \atop \wh\de_S\in B^{^{\geq0}}_{d}(\wh X_S)}W_{\de_S}(X_S),
\end{equation}
where $B^{^{\geq0}}_{d}(\wh X_S)$ is the set of strictly balanced multidegrees $\un e\geq\un 0$ on $\wh X_S$ such that $|\un e|=d$, $\wh X_S$ is the partial blow up of $X$ at the nodes contained in $S$, $X_S$ is the strict transform of $X$, $\wh \de_S$ is a balanced multidegree on
$\wh X_S$ such that $|\wh \de_S|=d$, whereas $$\de_S={\widehat \de}_S{|_{X_S}}$$ and $|\de_S|=d-\sharp S$.

We notice that, if $\de\in B^{^{\geq0}}_d(X)$, denoting by $\ov{A_\de(X)}$ the closure of $A_\de(X)$ in $\Pbar$, similarly to lemma \ref{inclusion} we have the inclusion
\begin{equation}\label{inclusion2}
\ov{A_\de(X)}\subset \wt W_d(X).
\end{equation}

\begin{defi}
{\rm We denote by} $$\ov{A_d(X)}:=\ov{\bigcup_{\de\in B^{^{\geq0}}_d(X)}A_\de(X)}\subset \Pbar.$$
\end{defi}

\begin{teo}\label{totalclosure}
Let $X=C_1\cup C_2$ be a binary curve of genus $g\geq 2$ with $\D\geq 2$ nodes and smooth components. Take $1\leq d\leq g-1$. Then
$$\wt W_d(X)=\ov{A_d(X)}\subset \Pbar.$$
\end{teo}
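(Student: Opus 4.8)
The plan is to prove the two inclusions separately, exploiting the stratification of $\Pbar$ in (\ref{strata}) together with the results already established for binary curves, in particular Proposition \ref{closure} and the equality $W_\de(X)=A_\de(X)$ on a binary curve for balanced $\de\geq\un 0$. The inclusion $\ov{A_d(X)}\subset\wt W_d(X)$ should be essentially formal: each $A_\de(X)$ with $\de\in B_d^{\geq 0}(X)$ is contained in $\wt W_d(X)$ by (\ref{inclusion2}), and since $\wt W_d(X)$ is a closed subset of $\Pbar$ (it is a union over finitely many $S$ of the closed Brill--Noether loci $W_{\de_S}(X_S)$, sitting inside the strata $P_S^{\de_S}$ whose closures are controlled), taking the closure of the union over all $\de$ keeps us inside $\wt W_d(X)$. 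I would make the closedness of $\wt W_d(X)$ explicit by checking that the boundary of $W_{\de_S}(X_S)$ inside $\overline{P_S^{\de_S}}\subset\Pbar$ — which corresponds to smoothing further nodes, i.e.\ enlarging $S$ — again lands in a piece $W_{\de_{S'}}(X_{S'})$ appearing in the disjoint union; this is the analogue of \cite{bib:theta}[Theorem 4.2.6] and uses that a balanced multidegree on $\wh X_S$ restricts, after specialization, to a balanced multidegree on $\wh X_{S'}$ (cf.\ Remark \ref{balanceNorm}).

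For the reverse inclusion $\wt W_d(X)\subset\ov{A_d(X)}$, I would fix a point $[M,S]\in\wt W_d(X)$, so $M=\wh M_S|_{X_S}$ for some strictly balanced $\wh\de_S\geq\un 0$ on $\wh X_S$ with $h^0(X_S,M)>0$, and show it is a limit of line bundles in $\IM\al_X^\de$ for $\de\in B_d^{\geq 0}(X)$. The strategy is a double reduction. First, when $S=\emptyset$: then $[M,\emptyset]$ is just a point of $W_\de(X)$ with $\de$ balanced, $\de\geq\un 0$, $|\de|=d\leq g-1$, and $h^0(X,M)>0$; Proposition \ref{closure} produces directly a family $L_t\in\IM\al_X^\de\subset A_d(X)$ with $L_t\to M$, so $[M,\emptyset]\in\ov{A_d(X)}$. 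Second, for general nonempty $S$, I would argue by induction on $\sharp S$, mimicking the inductive structure in the proof of Theorem \ref{Ad(X)} for irreducible curves: pick a node $n\in S$, blow up only the nodes of $S\setminus\{n\}$ to get $\wh X_{S\setminus\{n\}}$, on which $M$ (suitably interpreted on $X_S$, which is a partial normalization of $\wh X_{S\setminus\{n\}}$ at $n$) is still balanced by Remark \ref{balanceNorm}; by the inductive hypothesis the corresponding point is in the closure of the union of Abel images of the smaller partial blow-ups, and then a one-parameter specialization moving a smooth point $p_t$ on the component whose degree needs adjusting into the node $n$ — exactly the device used repeatedly in cases (2)--(5) of Theorem \ref{Ad(X)} and in Proposition \ref{closure} — realizes $[M,S]$ as the limit. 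The bookkeeping to watch is that when we move the point into $n$ and pass to the blow-up, the resulting multidegree on $\wh X_S$ is the given strictly balanced $\wh\de_S$, and that on the exceptional component the line bundle becomes $\Oc_{E_i}(1)$; this is forced by the definition of strictly balanced multidegree on a binary curve and by how push-forward along the blow-down interacts with the Abel map.

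Concretely I would organize the argument as: (1) recall/verify $\wt W_d(X)$ is closed in $\Pbar$, giving $\supset$; (2) handle the open stratum $S=\emptyset$ via Proposition \ref{closure}; (3) set up the induction on $\sharp S$ and, for the inductive step, choose $n\in S$, apply Remark \ref{balanceNorm} to keep balancedness after normalizing at $n$, invoke the inductive hypothesis on $\wh X_{S\setminus\{n\}}$ (a quasistable curve with fewer blown-up nodes), and (4) perform the explicit one-parameter specialization $L_t(p_t)\to[M,S]$, checking the multidegree and the value $\Oc(1)$ on $E_i$. I expect the main obstacle to be step (4) combined with the compatibility in step (3): one must check carefully that the specialization of a family of balanced line bundles on $\wh X_{S\setminus\{n\}}$, after tensoring by a section supported at a point degenerating to $n$, converges in $\Pbar$ to precisely the prescribed strictly balanced class on $\wh X_S$ rather than to some other point of the boundary of the stratum — i.e.\ controlling the limit in the non-separated Picard functor. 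The binary combinatorics (only two components, explicit inequalities $m(d,g)\le d_i\le M(d,g)$) is what makes this tractable, and Lemma \ref{Lucia} together with Lemma \ref{balanced+1} ensures at each stage there is a legitimate non-exceptional component on which to move the point while staying inside the balanced region.
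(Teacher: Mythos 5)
Your proposal matches the paper's proof in all essentials: the inclusion $\ov{A_d(X)}\subset\wt W_d(X)$ via the analogue of Lemma \ref{inclusion} (i.e.\ (\ref{inclusion2}) and finiteness of $B^{\geq 0}_d(X)$), and the reverse inclusion by induction on $\sharp S$, using Proposition \ref{closure} on the partial normalization, lifting the approximating family via \cite[Lemmas 2.2.3, 2.2.4]{bib:theta}, and then the double specialization $L_t(p_u)\to[M_t,n]\to[M,n]$ with a moving point $p_u$ tending to the node. The only cosmetic difference is that you phrase the induction as descending from $S$ to $S\setminus\{n\}$ while the paper ascends from $S$ to $T=S\cup\{n\}$; the content is identical.
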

\begin{proof}
Let us observe that, since $\sharp B^{^{\geq0}}_d(X)<\infty$, we have that
$$\ov{\bigcup_{\de\in B^{^{\geq0}}_d(X)}A_\de(X)}=\bigcup_{\de\in B^{^{\geq0}}_d(X)}\ov{A_\de(X)}.$$
For any $\de\in B^{^{\geq0}}_{d}(X)$, by (\ref{inclusion2}) we get that inclusion ($\supset$) holds. Let us now prove inclusion ($\subset$). By (\ref{W+}) it is sufficient to show that for any $\emptyset\subset S\subset X^{\rm sing}$, $$W_{\de_S}(X_S)\subset\ov{A_\de(X)}$$ for a certain $\de\in B^{^{\geq0}}_d(X)$.

First of all we notice that by (\ref{W+}), we can equivalently write
$$\wt W_d(X)=\left\{[M,S]\in\Pbar\mbox{ s.t. }M\in W_{\de_S}(X_S)\mbox{ with }\de_S\geq\un 0\right\}.$$
Let us assume that $\sharp S=1$, with $S=\{n\}$; take $M\in W_{\de_S}(X_S)$ with $\de_S={\wh\de_S}|_{X_S}$ and $\wh\de_S\in\ov{B^{^{\geq0}}_{d}(\wh X_S)}$, and consider $[M,S]=[M,n]$. Thanks to the stratification of $\Pbar$ there exists $\de=(d_1,d_2)\in B^{^{\geq0}}_d(X)$ such that either $\de_S=(d_1-1,d_2)$ or $\de_S=(d_1,d_2-1)$. We assume, with no loss of generality, that $\de_S=(d_1-1,d_2)$. Now by proposition \ref{closure} we know that $M\in A_{\de_S}(X_S)$, i.e. there exists a family $M_t\in\IM\al_{X_S}^{\de_S}$ such that $M_t$ specializes to $M$ on $X_S$ as $t\mapsto 0$. By \cite[Lemmas 2.2.3,2.2.4]{bib:theta} we have that for any $t$ there exists $L_t\in\IM\al_X^{\de_S}$ such that the pullback of $L_t$ to $X_S$ is $M_t$. Let us fix $t$ and take a moving point $p_u$ on $C_1\cap\dot X$ such that $p_u$ specializes to $n$ as $u\mapsto 0$. We see that by construction $\grad L_t(p_u)=\de$ and $L_t(p_u)\in\IM\al_X^{\de}$; moreover
$$L_t(p_u)\stackrel{u\rightarrow 0}{\longrightarrow}[M_t,n]\in\ov{A_\de(X)}.$$
Now we let $t$ tend to $0$, so we obtain that $[M_t,n]\mapsto[M,n]$, and $[M,n]\in\ov{A_\de(X)}$.

We proceed by induction on $\sharp S$; we have just proved that when $\sharp S=1$, then for any $\wh\de_S\in\ov{B^{^{\geq0}}_{d}(\wh X_S)}$ there exists $\de\in B^{^{\geq0}}_d(X)$ such that $W_{\de_S}(X_S)\subset\ov{A_\de(X)}$. Let us suppose that $S\subset X^{\rm sing}$ is such that for any $\wh\de_S\in\ov{B^{^{\geq0}}_{d}(\wh X_S)}$ there exists $\de\in B^{^{\geq0}}_d(X)$ such that $W_{\de_S}(X_S)\subset\ov{A_\de(X)}$, or equivalently, $[M,S]\in\ov{A_\de(X)}$ for every $M\in W_{\de_S}(X_S)$.

We want to prove that for $T\subset X^{\rm sing}$, with $T=S\cup n$ for any node $n$ of $X_S$, then taking  $\wh\de_T\in\ov{B^{^{\geq0}}_{d}(\wh X_T)}$ there exists $\de\in B^{^{\geq0}}_d(X)$ such that $[M_T,T]\in\ov{A_\de(X)}$ with $M_T\in W_{\de_T}(X_T)$.

We take an element $M_T\in W_{\de_T}(X_T)$, and consider $[M_T,T]$. By \ref{closure} we know that $M_T\in A_{\de_T}(X_T)$, hence there exists a family $M_T^t\in\IM\al_{X_T}^{\de_T}$ such that $M_T^t$ specializes to $M_T$ on $X_T$ as $t\mapsto 0$. Let $\de_S=(d_1^S,d_2^S)$ be a multidegree on $X_S$ such that $|\de_T|=|\de_S|-1$ and $\wh\de_S\in\ov{B^{^{\geq0}}_{d}(\wh X_S)}$; it exists because of the stratification of $\Pbar$. Let us assume that, say, $\de_T=(d_1^S-1,d_2^S)$. Again \cite[Lemmas 2.2.3,2.2.4]{bib:theta} imply that for any $t$ there exists $M_S^t\in\IM\al_{X_S}^{\de_T}$ such that the pullback of $M_S^t$ to $X_T$ is $M_T^t$. We fix $t$ and take $p_u\in C_1\cap\dot X$ specializing to $n$ on $X_S$; then we have that $\grad M^t_S(p_u)=\de_S$, hence by inductive hypothesis $[M^t_S(p_u),S]\in\ov{A_\de(X)}$ for a certain $\de\in B^{^{\geq0}}_d(X)$. Then we have that $$[M^t_S(p_u),S]\stackrel{u\rightarrow 0}{\longrightarrow}[M^t_T,T]\in\ov{A_\de(X)},$$
and again letting $t$ tend to $0$ we obtain that $$[M_T^t,T]\stackrel{t\rightarrow 0}{\longrightarrow}[M_T,T]\in\ov{A_\de(X)},$$
as we wanted.

It follows that
$$\bigsqcup\limits_{\emptyset\subset S\subset X^{\rm sing} \atop \wh\de_S\in B^{^{\geq0}}_{d}(\wh X_S)}W_{\de_S}(X_S)=\bigcup_{\de\in B^{^{\geq0}}_d(X)}\ov{A_\de(X)},$$
hence we get the conclusions.
\end{proof}

We are now going to investigate about the closure inside $\Pbar$ of the set $A_\de(X)$ when $\de$ is a strictly balanced multidegree on $X$ binary curve. Before, we need to recall some definitions introduced in \cite{bib:compJac}.

\begin{defi}\label{dominant}
{\rm Let $X$ and $\wh X$ be two Deligne-Mumford semistable curves; we say that $\wh X$ \textit{dominates} $X$ if they have the same stable model and if there exists a surjective morphism of $\wh X$ onto $X$ such that every component of $\wh X$ is either contracted to a point or mapped birationally onto its image.}
\end{defi}

\begin{defi}\label{refinement}
{\rm Let $\de\in\ov{B_d(X)}$ and $\wh\de\in\ov{B_d(\wh X)}$. We say that $\wh\de$ is a \textit{refinement} of $\de$, and we denote it by $$\wh\de\preceq\de,$$ if and only if $\wh X$ dominates $X$ via a map $\varphi$ and for every subcurve $Y$ of $X$ there exists a subcurve $\wh Y$ of $\wh X$ such that $\varphi$ maps $\wh Y$ to $Y$ and $|\de_Y|=|\wh\de_{\wh Y}|$}.
\end{defi}

We are now able to state:

\begin{prop}
Let $X$ be a binary curve of genus $g\geq 2$ and $\de\geq\un 0$ a strictly balanced multidegree on $X$. Then $$\ov{A_\de(X)}=\bigsqcup\limits_{{\emptyset\subset S\subset X^{\rm sing} \atop \wh\de_S\in B^{^{\geq0}}_{d}(\wh X_S) }\atop \wh\de_S\preceq\de}W_{\de_S}(X_S)\subset \wt W_d(X).$$
\end{prop}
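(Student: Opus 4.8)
The plan is to combine the description of the closure of a Picard stratum in $\Pbar$ from \cite{bib:compJac} with a refinement of the inductive argument used to prove Theorem \ref{totalclosure}. First note that the right-hand side is automatically a disjoint union, because its pieces $W_{\de_S}(X_S)$ sit in the pairwise disjoint strata $P_S^{\wh\de_S}$ of $\Pbar$, and that it is contained in $\wt W_d(X)$ by the very definition (\ref{W+}); so it suffices to establish the two inclusions between $\ov{A_\de(X)}$ and $\bigsqcup_{\wh\de_S\preceq\de}W_{\de_S}(X_S)$ (the union ranging over $\emptyset\subset S\subset X^{\rm sing}$ and $\wh\de_S\in B^{^{\geq0}}_d(\wh X_S)$ with $\wh\de_S\preceq\de$).

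For the inclusion ($\subset$): since $\de$ is strictly balanced, $A_\de(X)\subset\Pic^\de X=P^\de_\emptyset$, and by the theory of dominations and refinements recalled in Definitions \ref{dominant} and \ref{refinement} (see \cite{bib:compJac}) the Zariski closure of the stratum $P^\de_\emptyset$ inside $\Pbar$ is the union of the strata $P_S^{\wh\de_S}$ with $\wh\de_S\preceq\de$; hence $\ov{A_\de(X)}$ meets no other stratum. On the other hand $\ov{A_\de(X)}\subset\wt W_d(X)$ by (\ref{inclusion2}), so among those strata $\ov{A_\de(X)}$ meets only the ones with $\wh\de_S\in B^{^{\geq0}}_d(\wh X_S)$, and inside each such stratum it is contained in $W_{\de_S}(X_S)$. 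Combining the two statements yields ($\subset$).

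For the inclusion ($\supset$): I would show that $W_{\de_S}(X_S)\subset\ov{A_\de(X)}$ whenever $\wh\de_S\in B^{^{\geq0}}_d(\wh X_S)$ satisfies $\wh\de_S\preceq\de$, by induction on $\sharp S$ along the lines of Theorem \ref{totalclosure}, but tracking the refinement relation. When $\sharp S=1$, the unique exceptional component $E$ of $\wh X_S$ carries $\wh\de_S|_E=1$, so the refinement condition applied to the subcurves $C_1,C_2\subset X$ forces $\de_S=(d_1-1,d_2)$ or $(d_1,d_2-1)$, which is exactly the multidegree used in the base step of Theorem \ref{totalclosure}; one then concludes with the same double specialization: Proposition \ref{closure} gives a family $M_t\in\IM\al_{X_S}^{\de_S}$ converging to the given $M$, \cite[Lemmas 2.2.3, 2.2.4]{bib:theta} lift each $M_t$ to some $L_t\in\IM\al_X^{\de_S}$, and letting a moving point $p_u\in\dot C_i$ collapse to the node and then $t\to0$ exhibits $[M,S]$ as a limit of points of $\IM\al_X^\de$. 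In the inductive step, writing $T=S\cup\{n\}$ with $\wh\de_T\preceq\de$, a short combinatorial check (distributing the degree $1$ of the new exceptional component $E_n$ onto one of its two branches) produces a $\wh\de_S\in B^{^{\geq0}}_d(\wh X_S)$ with $\wh\de_T\preceq\wh\de_S\preceq\de$, transitivity of $\preceq$ being immediate from Definition \ref{refinement} and strict balancedness of $\wh\de_S$ following from that of $\wh\de_T$ via the balancedness inequalities for binary curves (cf. Lemma \ref{balanced+1}); applying the inductive hypothesis to $W_{\de_S}(X_S)$ and repeating the double specialization of Theorem \ref{totalclosure} upgrades it to $W_{\de_T}(X_T)\subset\ov{A_\de(X)}$. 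Together with ($\subset$), this gives the claimed equality.

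The main obstacle is the ($\subset$) direction — specifically, isolating and citing the precise statement from \cite{bib:compJac} that the closure in $\Pbar$ of a Picard stratum $P^\de_\emptyset$ is the union of the strata indexed by the balanced multidegrees refining $\de$, equivalently that a limit in $\Pbar$ of line bundles of fixed multidegree $\de$ has underlying balanced multidegree $\preceq\de$. Once this is in hand, the rest is a careful but routine adaptation of Theorem \ref{totalclosure}, the only genuinely new point being the combinatorial fact that every refinement of $\de$ carried by $\wh X_T$ is reached, one node at a time, through refinements of $\de$ carried by the intermediate partial blow-ups.
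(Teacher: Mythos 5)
Your proposal is correct and follows essentially the same route as the paper, which likewise disposes of $(\subset)$ by combining semicontinuity with the stratification of $\Pbar$ and proves $(\supset)$ by rerunning the induction of Theorem \ref{totalclosure} while tracking the refinement relation $\wh\de_S\preceq\de$. You are in fact more explicit than the paper's own (very terse) proof, in particular in isolating the needed fact that the closure of the stratum $P_\emptyset^\de$ consists of the strata indexed by refinements of $\de$, and in checking the one-node-at-a-time combinatorics of refinements in the inductive step.
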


\begin{proof}
Inclusion $(\subset)$ is obvious by an argument analogous to lemma \ref{inclusion}. The proof of $(\supset)$ is actually the same as in \ref{totalclosure}, i.e. we take an element $[M,S]\in\Pbar$ such that $M\in W_{\de_S}(X_S)$ and $\wh\de_S\preceq\de$ and we use the same argument as in \ref{totalclosure}, considering that $\wh\de_S\preceq\de$, hence $W_{\de_S}(X_S)\subset\ov{A_\de(X)}$.
\end{proof}

\subsubsection{\bf Degree $1$}\label{degree1}

We are now going to investigate what happens when the degree $d=1$. Let $X$ be a nodal connected curve of genus $g\geq 2$, let $C_1,\ldots,C_\ga$ be its irreducible components, and set $g_i=g(C_i)$, and $\D_i=\sharp(C_i\cap\ov{X\setminus C_i})$.

\begin{lemma}\label{negdeg1}
Let $X$ be a semistable curve of genus $g\geq 2$ as above and $\de$ be a balanced multidegree on $X$ such that $|\de|=1$ and $\de\ngeqslant \un 0$. Then $W_\de(X)=\emptyset$.
\end{lemma}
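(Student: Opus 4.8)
The plan is to argue by contradiction. Suppose $W_\de(X)\neq\emptyset$, fix $L\in W_\de(X)$ and a nonzero section $s\in H^0(X,L)$, and let $Z\subseteq X$ be the subcurve which is the union of the irreducible components of $X$ on which $s$ does not vanish identically; put $Z^c=\ov{X\setminus Z}$. Since $\de\ngeqslant\un 0$ there is a component $C_j$ with $d_j<0$, and a line bundle of negative degree on an irreducible curve has no nonzero section, so $s|_{C_j}\equiv 0$ and $C_j\subseteq Z^c$; in particular $Z^c\neq\emptyset$. Also $Z\neq\emptyset$ since $s\neq 0$, and, $X$ being connected, $\delta_Z=\sharp(Z\cap Z^c)\geq 1$.

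First I would bound $d_Z$ from below using $s$. At every node of $Z\cap Z^c$ one branch lies on a component of $Z^c$ where $s\equiv 0$, so $s$ vanishes there, and these are smooth points of $Z$; hence $s|_Z$ is a nonzero section of $L|_Z(-\Delta)$, with $\Delta$ the reduced divisor on $Z\cap Z^c$. Restricting further to each component $C_i\subseteq Z$, where $s$ is still nonzero, gives $d_i\geq\sharp(C_i\cap Z^c)$, and summing over the components of $Z$ yields $d_Z\geq\delta_Z\geq 1$, hence $d_{Z^c}=1-d_Z\leq 0$. Combining $d_Z\geq\delta_Z$ with the upper basic inequality $d_Z\leq\frac{w_Z}{2g-2}+\frac{\delta_Z}{2}$ for $Z$ (applied componentwise and summed when $Z$ is disconnected) gives $w_Z\geq\delta_Z(g-1)$; since $X$ is semistable, $\deg_C\omega_X\geq 0$ for every component $C$, so $w_Z\leq\deg\omega_X=2g-2$, and therefore $\delta_Z(g-1)\leq 2(g-1)$, i.e. $\delta_Z\in\{1,2\}$.

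Then I would treat the two cases. If $\delta_Z=2$, equality forces $w_Z=2g-2$ and hence $w_{Z^c}=0$; as the summands $\deg_C\omega_X$ for $C\subseteq Z^c$ are $\geq 0$ and add up to $0$, every component of $Z^c$ is an exceptional $\bP^1$ and so has $L$-degree exactly $1$ by balancedness, contradicting $d_j<0$ for $C_j\subseteq Z^c$. If $\delta_Z=1$, the unique node $n\in Z\cap Z^c$ is separating, $Z$ and $Z^c$ are connected, $g=g_Z+g_{Z^c}$, and $w_Z=2g_Z-1\geq 0$, $w_{Z^c}=2g_{Z^c}-1\geq 0$ force $g_Z,g_{Z^c}\geq 1$, so $g_Z\leq g-1$ and $d_Z\leq\frac{2g_Z-1}{2g-2}+\frac12<\frac32$; thus $d_Z=1$ and $d_{Z^c}=0$, and, since $s|_Z$ vanishes at $n$, $L|_Z(-n)$ is effective of total degree $0$ and nonzero on every component of $Z$, which forces $d_i=0$ on each component of $Z$ not through $n$. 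To conclude one must rule out a negative component inside $Z^c$ even though $d_{Z^c}=0$; I would do this by exploiting the strict form of the balanced inequality on the sub-subcurves of $Z^c$ (whose contact with the rest of $X$ is not contained in the exceptional locus), descending through the dual tree of $Z^c$ and using that the bound $d_{Z^c}=0$ is attained.

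The step I expect to be the main obstacle is precisely this last, separating-node, case: the purely numerical basic inequality for the pair $(Z,Z^c)$ is symmetric and does not by itself exclude a negative entry of $\de|_{Z^c}$, so the argument genuinely requires either the strict form of balancedness or an inductive reduction to a curve of smaller genus obtained by cutting $X$ at $n$.
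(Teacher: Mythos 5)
Your numerical reduction (passing to the non-vanishing locus $Z$ of a section, proving $d_Z\geq\delta_Z$, combining with the upper basic inequality to get $\delta_Z\leq 2$, and disposing of $\delta_Z=2$ via $w_{Z^c}=0$) is correct, and it is genuinely different from the paper's route. The paper never introduces $Z$: it fixes the component $C_i$ with $d_i<0$, notes that any nonzero section vanishes identically on $C_i$ and hence at the $\delta_i$ branches $q_1,\ldots,q_{\delta_i}$ on $Z_i:=\ov{X\setminus C_i}$, concludes $h^0\bigl(Z_i,L_{Z_i}(-q_1-\cdots-q_{\delta_i})\bigr)>0$, and from this \emph{infers} $\deg L_{Z_i}\geq\delta_i$, i.e. $d_i\leq 1-\delta_i$; played against the lower basic inequality for $C_i$ this forces $\delta_i\leq 2-2g_i/g$, and the surviving cases are eliminated essentially as in your $\delta_Z=2$ case. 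The inference ``$h^0>0\Rightarrow\deg\geq 0$'' on the reducible curve $Z_i$ is precisely the step your set-up refuses to take for granted: it is valid only if the section does not vanish identically on any component of $Z_i$, which is what restricting to the non-vanishing locus is designed to ensure.

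The step you flag as the main obstacle --- excluding a negative entry of $\de|_{Z^c}$ when $\delta_Z=1$, $d_Z=1$, $d_{Z^c}=0$ --- is indeed a gap in your argument, but it cannot be closed, because the statement fails there. Take $X=Z\cup A\cup B$ with $Z$ smooth of genus $g_Z\geq 2$, $A\cong B\cong\bP^1$, $\sharp(A\cap B)=3$, $\sharp(Z\cap B)=1$ (say at $n$) and $Z\cap A=\emptyset$; this is a stable curve of genus $g=g_Z+2$. The multidegree $\de=(d_Z,d_A,d_B)=(1,-1,1)$ satisfies the basic inequality for every connected subcurve (for $Z$ and for $A\cup B$ one gets $|1-\tfrac{2g_Z-1}{2g_Z+2}|=\tfrac{3}{2g_Z+2}\leq\tfrac12$, the other subcurves are easier), so $\de$ is balanced, and for $g_Z\geq 3$ all these inequalities are strict, so $\de$ is even strictly balanced; yet $\de\ngeqslant\un 0$. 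Now choose $L$ with $L|_Z\cong\Oc_Z(n)$: the unique section of $\Oc_Z(n)$ vanishes at $n$, so it glues with the zero section on $A\cup B$ to a nonzero global section, whence $W_\de(X)\neq\emptyset$. This is exactly the configuration of your last case ($Z$ is the non-vanishing locus, $\delta_Z=1$, $A\subset Z^c$ has degree $-1$), and it is also exactly where the paper's inference breaks: here $\deg L_{Z\cup B}=2<3=\delta_A$ although $h^0(Z\cup B,L(-q_1-q_2-q_3))>0$, the section vanishing identically on $B$. So your diagnosis is accurate, and neither strict balancedness nor a descent through the dual tree of $Z^c$ can rescue the argument; the lemma needs a stronger hypothesis (or a restriction on the curves considered) before the separating-node case can be eliminated, and what your analysis actually establishes is the correct weaker assertion that any component of negative degree must lie on the far side of a single separating node of the non-vanishing locus of every section.
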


\begin{proof}
Let us suppose that $C_i$ is an irreducible component of $X$ such that $d_i<0$. By the balancing condition we know that:
\begin{equation}\label{balance}
\dfrac{2g_i-2+\D_i}{2g-2}-\dfrac{\D_i}2\leq d_i\leq \dfrac{2g_i-2+\D_i}{2g-2}+\dfrac{\D_i}2.
\end{equation}
 Assume that there exists $L\in W_\de(X)$, and denote by $n_1,\ldots,n_{\D_i}$ the nodes of $C_i\cap\ov{X\setminus C_i}$. Moreover, denote for simplicity $Z_i=\ov{X\setminus C_i}$ and let $q_1,\ldots,q_{\D_i}$ be the branches of $n_1,\ldots,n_{\D_i}$ on $Z_i$. If $Y\subset X$ is a subcurve of $X$, we denote by $L_Y:=L_{|_{Y}}$.  Since by assumption $h^0(C_i,L_{C_i})=0$, we have that
$$h^0(X,L)=h^0(Z_i,L_{Z_i}(-q_1-\cdots-q_{\D_i}))> 0.$$
Hence we must have that $\deg L_{Z_i}\geq \D_i$, and recalling that $\deg L_{C_i}=d_i=1-\deg L_{Z_i}$, it follows that
\begin{equation}\label{degdiseq}
d_i\leq 1-\D_i.
\end{equation}
Therefore we have to verify that
$$\dfrac{2g_i-2+\D_i}{2g-2}-\dfrac{\D_i}2\leq1-\D_i;$$
this holds if and only if $$\D_i\leq 2-\dfrac{2g_i}g.$$
So we have two possibilities:
\begin{itemize}
\item[(i)]either $g_i=0$ and $\D_i=1$ or $\D_i=2$,
\item[(ii)]or $g_i\neq 0$ and $\D_i=1$.
\end{itemize}
In case (i), $C_i\cong\bP^1$, so let us suppose that $\D_i=2$; hence $C_i$ is an exceptional component of $X$, and by the balancing condition it must be $d_i=1$. But by (\ref{degdiseq}), we see that $d_i\leq -1$, and we get a contradiction.

Suppose now that $g_i\geq 0$ and $\D_i=1$. Then, by (\ref{balance}) we have that
$$d_i\geq\dfrac{-1}2+\dfrac{2g_i-1}{2g-2},$$
hence $d_i\geq 0$, which is again a contradiction in both cases (i) and (ii). Therefore $W_\de(X)=\emptyset$.
\end{proof}

By lemma \ref{negdeg1} we have that the only possibility that $W_\de(X)\neq\emptyset$ is when $\de=(1,0,\ldots,0)$, up to swapping some indices. In particular, when $\de=(1,0,\ldots,0)$ and $L\in W_\de(X)$ we have that by \cite[Lemma 4.2.3]{bib:linear} either $h^0(L)\leq 1$ or $C_1$ is a separating line, $h^0(L)=2$ and $L_{\ov{X\setminus C_1}}=\Oc_{\ov{X\setminus C_1}}$. We have the following

\begin{teo}\label{Ad(X)deg1}
Let $X$ be a connected nodal curve; let $\de=(1,0,\ldots,0)$ be a multidegree on $X$. Then $A_\de(X)=W^+_\de(X)$.
\end{teo}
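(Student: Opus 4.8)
The plan is to prove the two inclusions $A_\de(X)\subset W^+_\de(X)$ and $W^+_\de(X)\subset A_\de(X)$ separately. The first is immediate from Lemma \ref{inclusion}, which already gives $A_\de(X)\subset W_\de^+(X)$ for any $\de\geq\un 0$ on a reducible curve; since $\de=(1,0,\ldots,0)\geq\un 0$, nothing further is needed here. The heart of the matter is therefore the reverse inclusion: every line bundle $L\in W_\de^+(X)$ must be realized as a limit of line bundles of the form $\Oc_X(p)$ with $p\in\dot C_1$.

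For the reverse inclusion I would first exploit the structural result recalled just before the statement: if $L\in W_\de^+(X)$ with $\de=(1,0,\ldots,0)$, then by \cite[Lemma 4.2.3]{bib:linear} either $h^0(X,L)\leq 1$, or $C_1$ is a separating line in $X$, $h^0(X,L)=2$, and $L_{\ov{X\setminus C_1}}=\Oc_{\ov{X\setminus C_1}}$. I would split the argument along exactly these two cases. In the first case $h^0(X,L)=1$ (the case $h^0=0$ being excluded since $L\in W^+_\de(X)$ forces $h^0(Z,L|_Z)>0$ for all $Z$, in particular $h^0(X,L)>0$): a nonzero section of $L$ vanishes on all of $\ov{X\setminus C_1}$ except possibly at the branch points lying on $C_1$, and on $C_1\cong\bP^1$ the degree-$1$ bundle $L_{C_1}$ has a section vanishing at a single (possibly nodal) point. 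One then needs to move that point into $\dot C_1$: choose a moving point $p_t\in\dot C_1$ specializing to the support point of the section, form $\Oc_X(p_t)\in\IM\al_X^\de$, and check that $\Oc_X(p_t)\to L$ as $t\to 0$. The key point to verify is that the limit in $\Pic^\de X$ (or, if needed, in the appropriate compactification) is exactly $L$ and not some other bundle in the same fiber of the normalization map — this is where one invokes the analogue of \cite[Lemmas 2.2.3, 2.2.4]{bib:theta} used repeatedly in the irreducible case and in Theorem \ref{totalclosure}, namely that the relevant fiber $W_L(X)$ is a single point precisely because $h^0(X,L)=1$ and the section has a prescribed behavior at the node.

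In the second case, where $C_1$ is a separating line, $h^0(X,L)=2$, and $L$ is trivial away from $C_1$, the curve $X$ decomposes at the node(s) joining $C_1$ to the rest, and $L$ is essentially determined: $L_{C_1}=\Oc_{\bP^1}(1)$ with its full two-dimensional space of sections, and $L$ is trivial on $\ov{X\setminus C_1}$. Here the limit construction is cleaner: pick $p_t\in\dot C_1$ moving toward the branch point $q^1\in C_1$ of the separating node; then $\Oc_X(p_t)$ has multidegree $(1,0,\ldots,0)$, lies in $\IM\al_X^\de$, and as $t\to 0$ the section cutting out $p_t$ degenerates to one vanishing at $q^1$, whose complementary section glues to give precisely $L$ on $X$. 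One should double-check the separating-node bookkeeping — on a separating node the fiber of the pullback map is a single point, so there is no ambiguity in the limit — and confirm that $\Oc_X(q^1)$, interpreted correctly on $X$, coincides with $L$.

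The main obstacle I expect is the second case, i.e.\ handling the separating line with $h^0=2$: one must be careful that $\Oc_X(p)$ for $p\in\dot C_1$ really does limit to the bundle with the prescribed trivial restriction on $\ov{X\setminus C_1}$, and that no extra component of some $W_M(X)$ slips in. The rest is a routine specialization argument of the same flavor as those in Section \ref{irrcurves} and in the proof of Theorem \ref{totalclosure}, using the cited lemmas from \cite{bib:theta} to lift families from the normalization and to pin down the limit uniquely. Since no balancing/compactification subtleties arise when $|\de|=1$ and $X$ need only be connected nodal (not even stable), the argument should go through in this generality without appealing to the structure of $\Pbar$.
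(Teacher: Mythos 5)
Your easy inclusion and your use of \cite[Lemma 4.2.3]{bib:linear} to split into the cases $h^0(L)=1$ and ``$C_1$ a separating line with $h^0(L)=2$'' match the paper. But in the case $h^0(L)=1$ there is a genuine gap. After noting that the unique section of $L$ cuts out a single point of $C_1$, you propose to run the same specialization argument no matter where that point sits, justifying the identification of the limit by saying that ``the relevant fiber $W_L(X)$ is a single point precisely because $h^0(X,L)=1$.'' This fails exactly when the support point is a \emph{nonseparating} node $n\in C_1\cap C_1^c$: there the fiber $F_M(X)$ of the pullback to the normalization at $n$ is $k^*$, not a point, and the family $\Oc_X(p_t)$ with $p_t\to n$ does not converge in $\Pic^\de X$ at all --- its limit is the boundary point $[M',n]$ of $\Pbar$, as in the specializations of Section \ref{irrcurves} and Theorem \ref{totalclosure} --- so it cannot produce $L$. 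The paper closes this case differently: it proves it is \emph{vacuous}. Pulling the section $s$ back to the normalization $X'$ at $n$, with branches $p\in C_1$ and $q$, one has $s(p)=s(q)=0$; since $\deg L|_{C_1}=1$ the section vanishes nowhere else on $C_1$, so it is nonzero at the remaining nodes of $C_1\cap\ov{X'\setminus C_1}$, and since $L$ restricts trivially to every other component, $s$ is a nonzero constant on $\ov{X'\setminus C_1}$, contradicting $s(q)=0$. Without this exclusion step your argument is incomplete; the only nodal case that actually occurs is a separating node, where the fiber of the pullback is a single point and your limit argument does work (and is essentially the paper's).

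Two smaller points. First, you assume $C_1\cong\bP^1$ in the generic case; the theorem allows $C_1$ of any genus (only the substance, $L|_{C_1}=\Oc_{C_1}(x)$ for a single point $x$, is what matters). Second, the section need not ``vanish on all of $\ov{X\setminus C_1}$'': since $L$ restricts to $\Oc_{C_j}$ for $j\neq 1$, it is a (possibly nonzero) constant on each such component --- indeed its being nonzero there is what drives the contradiction above. Finally, in both the smooth-support subcase and the separating-line case the paper observes that $L=\Oc_X(r)$ for a smooth point $r$, so $L\in\IM\al_X^\de$ outright and no limit is needed; your limiting construction in the $h^0=2$ case is workable but superfluous.
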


\begin{proof}
By lemma \ref{inclusion} we only need to prove that $A_\de(X)\supset W^+_\de(X)$.
As usual we call $C_1,\ldots,C_\gamma$ the irreducible components of $X$, then up to reordering we have that $\de|_{C_1}=1$. By \cite{bib:linear}[Lemma 4.2.3] if $L\in W^+_\de(X)$ we must have $h^0(L)=1$ unless $C_1$ is a separating line of $X$, we will discuss this case later, so suppose $C_1$ is not a separating line of $X$.

Then $L$ has one nonzero section $s$ on $X$. If it vanishes on a smooth point $r$ of $X$, we have that $L=\Oc_X(r)$, so $L\in\IM\al_X^\de$. Otherwise, if there exists a node $n\in C_1\cap C_1^c$ such that $s(n)=0$, we normalize $X$ at $n$; we denote $X'\stackrel{\nu}{\rightarrow}X$ the normalization at $n$. Now we have two possibilities:

(i) $X'$ is connected, i.e. $n$ is nonseparating. Let $L'$ be the pullback of $L$ to $X'$. Let us denote by $C_2$ the component of $X$ such that $\{n\}=C_1\cap C_2$ and by $p,q$ the branches of $n$ on $X'$ with $p\in C_1$. With abuse of notation we call again $s$ the pullback of $s$ to $L'$. Then $s(p)=s(q)=0$, but $L'$ has degree one, so $s$ doesn't vanish on other points of $C_1$, and in particular if $\{n_1,\ldots,n_l\}$ are the other nodes of $\ov{X'\setminus C_1}\cap C_1$, $s(n_i)\neq 0$. Notice that $l\geq 1$. Since the pullback of $L$ to the total normalization of $X$ is $(\Oc_{C_1}(p),\Oc_{C_2},\ldots,\Oc_{C_\gamma})$, then $s$ restricted to $X'\setminus C_1$ must be a constant, hence by what we just said a nonzero constant. In particular $s(q)\neq 0$, which is a contradiction. Therefore $s$ cannot vanish on a nonseparating node of $X$.

(ii) $X'$ is not connected. Then $X'=C_1\sqcup Z_1$ and $Z_1$
 is connected. In particular $n$ is a separating node of $X$. The pullback of $L$ to $X'$ is $M=(\Oc_{C_1}(p),\Oc_{Z_1})$, where $p$ is as in (i). Now let us consider a moving point $p_t\in C_1\setminus p$, such that $p_t\mapsto p$. Let $M_t=(\Oc_{C_1}(p_t),\Oc_{Z_1})\in\Pic X'$. Then the line bundle $\Oc_X(p_t)$ on $X$ pulls back to $M_t$ (abusing notation). As $p_t\mapsto p$, $M_t\mapsto M$ and $L_t\mapsto\wt L$ such that $h^0(\wt L)\geq 1$. Since $h^0(X',M)=1$ we have that $L$ is the unique line bundle on $X$ pulling back to $M$ and such that $h^0(L)\geq 1$. Hence $L=\wt L$. Hence $L\in A_\de(X)$.

If $C_1$ is a separating line, since $L\in W^+_\de(X)$ we have $h^0(L)=2$. Then we can choose $r\in C_1\setminus (C_1\cap C_1^c)$ such that $L=\Oc_X(r)$. So $L\in\IM\al_X^\de$.
\end{proof}

In what follows we are going to give a characterization of the closure of $A_\de(X)$ in $\Pbar$ for stable curves in degree $1$. Let then $X$ be a stable curve. Let as usual $X=C_1\cup\cdots\cup C_\ga$ be the decomposition of $X$ into irreducible components. If $X_S$ is a partial normalization of $X$ at a set $S$ of nodes, we consider the decomposition of $X_S$ in connected components:
$$X_S=X_1^S\sqcup\cdots\sqcup X_{r_S}^S.$$
We denote by
$$\wt\nu_S:\bigsqcup_{i=1}^\ga C_i^S\longrightarrow X_S,$$
the partial normalization of $X_S$ at all the nodes in the set $$\bigcup_{i=1}^\ga \left(C_i^S\cap(X_S\setminus C_i^S)\right).$$
We recall that by \cite{bib:compJac}, for any stable curve of genus $g\geq 2$ and any $d$, we have a decomposition
$$\Pbar=\coprod_{\emptyset\subset S\subset X^{\rm sing} \atop \de\in B_{d}(\widehat X_S)}P_S^{\de}.$$

We define

$$\wt W_d(X):=\bigsqcup\limits_{\emptyset\subset S\subset X^{\rm sing} \atop \wh \de_S\in B^{^{\geq0}}_{d}(\wh X_S)}W_{\de_S}^{+}(X_S),$$
where again $B^{^{\geq0}}_{d}(\wh X_S)$ is the set of strictly balanced multidegrees ${\widehat \de}_S\geq\un 0$ on $\wh X_S$ such that $|{\widehat \de}_S|=d$, and $\de_S={\widehat \de}_S{|_{X_S}}$ with $|\de_S|=d-\sharp S$. We will also use the notation
$$\de_S=(d^S_1,\ldots,d^S_\ga) $$ for the components of the multidegree $\de_S$ on $X_S$.
Let

$$\ov{A_d(X)}:=\ov{\bigcup_{\de\in B^{^{\geq0}}_d(X)}A_\de(X)}\subset \Pbar.$$

\begin{remark}\label{uno}
{\rm When the degree $d=1$, the elements of $B^{^{\geq0}}_1(X)$ are of the form $(d_1,\ldots,d_\ga)$ with $d_i=1$ for one suitable $i\in\{1,\ldots,\ga\}$ and $d_j=0$ for $j\neq i$. Thus, when we look at the set $B^{^{\geq0}}_{1}(\wh X_S)$, if it is nonempty it must be $\sharp S=1$.}
\end{remark}

We have the following result:

\begin{teo}\label{totalstable}
Let $X$ be a stable curve of genus $g\geq 2$ with $B^{\geq0}_1(X)\neq\emptyset$. Then
$$\wt W_1(X)=\ov{A_1(X)}\subset\ov{P_X^1}.$$
\end{teo}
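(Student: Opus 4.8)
The plan is to imitate the proof of Theorem \ref{totalclosure}, using Theorem \ref{Ad(X)deg1} and Remark \ref{uno} to reduce the statement to a finite, explicit list of boundary points. I would first unpack $\wt W_1(X)$. By Remark \ref{uno}, a nonempty $B^{\geq0}_1(\wh X_S)$ with $S\neq\emptyset$ forces $\sharp S=1$, so the disjoint union defining $\wt W_1(X)$ has only two kinds of pieces. The piece with $S=\emptyset$ is $\bigsqcup_{\de\in B^{\geq0}_1(X)}W^+_\de(X)$; since every $\de\in B^{\geq0}_1(X)$ is of the form $(1,0,\ldots,0)$ up to reordering (Remark \ref{uno}), Theorem \ref{Ad(X)deg1} gives $W^+_\de(X)=A_\de(X)$, so this piece is exactly $\bigcup_{\de\in B^{\geq0}_1(X)}A_\de(X)$, sitting inside the strata $P^\de_\emptyset\cong\Pic^\de X$ of $\Pbar$; in particular it lies in $\ov{A_1(X)}$. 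The pieces with $S=\{n\}$ have $|\de_S|=0$ and $\de_S\geq\un 0$, hence $\de_S=\un 0$, and a degree-zero line bundle on $X_S$ with nonzero sections on \emph{every} subcurve --- in particular around every cycle of the dual graph, where a section forces trivial monodromy --- must be $\Oc_{X_S}$. Thus $W^+_{\un 0}(X_S)=\{\Oc_{X_S}\}$, and $\wt W_1(X)$ is the union of $\bigcup_{\de\in B^{\geq0}_1(X)}A_\de(X)$ with the finite set of boundary points $[\Oc_{X_{\{n\}}},\{n\}]$, $n$ running over the nodes with $B^{\geq0}_1(\wh X_{\{n\}})\neq\emptyset$.

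For $\wt W_1(X)\supseteq\ov{A_1(X)}$ I would argue exactly as for Lemma \ref{inclusion} and inclusion (\ref{inclusion2}). Finiteness of $B^{\geq0}_1(X)$ gives $\ov{A_1(X)}=\bigcup_\de\ov{A_\de(X)}$, so it suffices to check $\ov{A_\de(X)}\subseteq\wt W_1(X)$. If $[M,S]\in\ov{A_\de(X)}$, then $[M,S]$ is a limit of line bundles $\Oc_X(\sum p_i)$; passing to the quasistable reduction, the limit bundle $\wh M_S$ on $\wh X_S$ has $h^0(Z,\wh M_S|_Z)>0$ for every subcurve $Z\subseteq\wh X_S$ by upper semicontinuity of $h^0$. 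Restricting to $X_S$ gives $M\in W^+_{\de_S}(X_S)$; restricting to the irreducible components (which then have nonnegative degree), together with the fact that the universal sheaf is $\Oc(1)$ on every exceptional component and the fact that $[M,S]\in\Pbar$ forces $\wh\de_S$ strictly balanced, shows $\wh\de_S\in B^{\geq0}_1(\wh X_S)$. Hence $[M,S]\in\wt W_1(X)$.

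There remains the reverse inclusion, for which --- by the first paragraph --- it is enough to put each boundary point $[\Oc_{X_{\{n\}}},\{n\}]$ inside $\ov{A_1(X)}$. Let $C_i$ be an irreducible component of $X$ carrying a branch of $n$, so that in $\wh X_{\{n\}}$ the exceptional component $E$ meets the strict transform of $C_i$. Just as in Theorem \ref{totalclosure}, the stratification of $\Pbar$ --- equivalently, the existence of a refinement $\wh\de_{\{n\}}\preceq\de$ in the sense of Definition \ref{refinement} --- yields a strictly balanced multidegree $\de\in B^{\geq0}_1(X)$ with degree $1$ on $C_i$ and $0$ elsewhere. Choosing a moving smooth point $p_u\in\dot C_i$ with $p_u\to n$, the family $\Oc_X(p_u)\in\IM\al_X^{\de}$ specializes in $\Pbar$, by the same node-smoothing computation used in Theorem \ref{Ad(X)}(2) and in Theorem \ref{totalclosure}, to the class on $\wh X_{\{n\}}$ having degree $1$ on $E$ and the trivial bundle on $X_{\{n\}}$, namely $[\Oc_{X_{\{n\}}},\{n\}]$. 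Hence $[\Oc_{X_{\{n\}}},\{n\}]\in\ov{A_\de(X)}\subseteq\ov{A_1(X)}$, and together with the previous paragraph this gives $\wt W_1(X)=\ov{A_1(X)}$.

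The step I expect to be the main obstacle is the combinatorial bookkeeping hidden in the last paragraph: checking that whenever $n$ contributes a point to $\wt W_1(X)$ --- so that $\wh\de_{\{n\}}$ is strictly balanced on $\wh X_{\{n\}}$ --- the untwist of $\wh\de_{\{n\}}$ onto a component meeting $n$ is again strictly balanced on $X$, i.e. that strict balancedness survives contracting $E$, and, relatedly, confirming that the limit of $\Oc_X(p_u)$ is exactly $[\Oc_{X_{\{n\}}},\{n\}]$ rather than a point of a neighbouring stratum. Both are of the same nature as the balancing computations already carried out for binary curves, and I expect them to go through with some care over the inequalities (\ref{BI}) on $X$ versus on the blow-up $\wh X_{\{n\}}$.
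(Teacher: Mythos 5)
Your proposal is correct and follows essentially the same route as the paper's own proof: Lemma \ref{inclusion} for one inclusion, Remark \ref{uno} to reduce to $\sharp S\leq 1$, Theorem \ref{Ad(X)deg1} for the open strata, the identification of each boundary stratum with the single point $[\Oc_{X_{\{n\}}},\{n\}]$, and the specialization $\Oc_X(p_u)\to[\Oc_{X_{\{n\}}},\{n\}]$ as $p_u\to n$. You merely make explicit two steps the paper leaves implicit (that $W^+_{\un 0}(X_S)=\{\Oc_{X_S}\}$, and the combinatorial check that a suitable $\de\in B^{\geq0}_1(X)$ with degree $1$ on a component through $n$ exists), which is a faithful rendering of the same argument.
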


\begin{proof}Inclusion ($\supset$) holds by lemma \ref{inclusion}. Now we prove inclusion ($\subset$).
By hypothesis the set $\ov{A_1(X)}$ is nonempty. We want to prove that for any $\emptyset\subset S\subset X^{\rm sing}$ such that $\wh\de_S\in{B^{^{\geq0}}_{1}(\wh X_S)}$, $$W_{\de_S}^{+}(X_S)\subset\ov{A_\de(X)}$$
for a certain $\de\in B^{^{\geq0}}_1(X)$. We can equivalently write
$$\wt W_1(X)=\left\{[M,S]\in\Pbar\mbox{ s.t. }M\in W_{\de_S}^{+}(X_S)\mbox{ with } \de_S\geq\un 0\right\}.$$

By remark \ref{uno} we can assume $\sharp S=1$, with $S=\{n\}$, so hereafter we will write $n$ instead of $S$. We take $M\in W_{\de_n}^{+}(X_n)$ with $\wh\de_n\in\ov{B^{^{\geq0}}_{1}(\wh X_n)}$, and consider $[M,n]$. Then by the stratification of $\ov{P_X^1}$ and remark \ref{uno} there exists $\de=(d_1,\ldots,d_\ga)\in B^{^{\geq0}}_d(X)$ such that $d_i-1=d^n_i=0$ for one $i$. We assume, with no loss of generality, that
$d_1-1=d^n_1=0$; now $M\in \Pic_{\de_n}X_n$ is such that $M=(\Oc_{X^n_1},\ldots,\Oc_{X^n_{r_n}})$, where $r_n$ is the number of connected components of $X_n$ and it is $1$ or $2$ whether $n$ is separating or not. Then, there exists $L\in\IM\al_X^{\de_n}$ such that the pullback of $L$ to $X_n$ is $M$, and of course $L=\Oc_X$. Let us take a moving point $p_u$ on $C_1\cap\dot X$ such that $p_u$ specializes to $n$ as $u\mapsto 0$. We see that by construction $\grad L(p_u)=\de$ and $L(p_u)\in\IM\al_X^{\de}$; moreover
$$L(p_u)\stackrel{u\rightarrow 0}{\longrightarrow}[M,n]\in\ov{A_\de(X)}.$$
\end{proof}

\begin{remark}
{\rm In \cite{bib:CapEst}[Proposition 3.15], the authors characterize the locus $\Sigma_g^1$ in $\ov{M}_g$ of the curves such that $B_1(X)$ is empty, i.e. the so called \textit{$1$-general} curves. They prove that if $g\geq 2$ and $g$ is odd, then the set $\Sigma_g^1$ is empty. Hence when $g$ is odd, if $B_1^{\geq0}(X)\subset B_1(X)$ is nonempty we are always in the case of theorem \ref{totalstable}. If, otherwise, $B_1^{\geq0}(X)=\emptyset$, then both the sets $\wt W_1(X)$ and $\ov{A_1(X)}$ are empty.}
\end{remark}


\begin{thebibliography}{alp}

\bibitem[A02]{bib:alexeev1}V. Alexeev, Complete moduli in the presence of semiabelian group action. Ann. of Math. (2) 155, 611–-708 (2002).
\bibitem[Al04]{bib:alexeev2}V. Alexeev, Compactified Jacobians and Torelli map. Publ. RIMS Kyoto Univ. 40, 1241–-1265 (2004).
\bibitem[AIK76]{AKI}A. Altman, A. Iarrobino, S. Kleiman, Irreducibility of the compactified
Jacobian. In Real and complex singularities (Proc. Ninth Nordic Summer School/NAVF Sympos. Math., Oslo, 1976), 1–12.
\bibitem[AK80]{bib:AltKl}A. Altman, S. Kleiman, Compactifying the Picard scheme, Adv. Math. 35, 50--112 (1980).
\bibitem[ACGH]{bib:ACGH}E. Arbarello, M. Cornalba, P.A. Griffiths, J. Harris, ``Geometry of Algebraic Curves'', Springer-Verlag New York Berlin Heidelberg Tokyo.
\bibitem[Br99]{bib:Bruno}A. Bruno, Degenerations of Linear Series and Binary Curves, Ph.D. Thesis, Brandeis University, (1999).
\bibitem[C1]{bib:compJac}L. Caporaso, A Compactification of the Universal Picard Variety over the Moduli Space of Stable Curves, J. of Amer. Math. Soc., Vol.7, No. 3, 589--660 (1994).
\bibitem[C2]{bib:theta}L. Caporaso, Geometry of the Theta Divisor of a compactified Jacobian, Journal of the European Mathematical Society 11, 1385--1427 (2009).
\bibitem[C3]{bib:survey}L. Caporaso, Compactified Jacobians, Abel maps and Theta divisors, Curves and Abelian varieties: international conference, in honor of Roy Smith's 65th birthday. March 30-April 2, 2007, University of Georgia, Athens. Contemporary Mathematics Volume 465. AMS Bookstore, (2008).
\bibitem[C4]{bib:naturality}L. Caporaso, Naturality of Abel maps, Manuscripta Mathematica, Vol 123, N.1, May 2007, 53--71.
    \bibitem[C5]{bib:binary}L. Caporaso, Brill Noether theory of binary curves, Mathematical Research Letters - Volume 17 - Issue 2/ March 2010, 243--262.
\bibitem[C6]{bib:linear}L. Caporaso, Linear series on semistable curves, (on line version) International Mathematics Research Notices.(2010), rnq188, 49 pages.
\bibitem[C7]{bib:Néron}L. Caporaso, Compactified Jacobians of N\'{e}ron type, Rendiconti Lincei: Matematica e Applicazioni 21, 1--15 (2010).
\bibitem[CE06]{bib:CapEst}L. Caporaso, E. Esteves, On Abel Maps of Stable Curves, Michigan Math. J.55, 575--607 (2007).
\bibitem[CCE08]{bib:gorenstein}L. Caporaso, J. Coelho, E. Esteves, Abel maps of Gorenstein curves, Rend. Circ. Mat. Palermo (2) 57, No. 1, 33--59 (2008).
\bibitem[C07]{bib:CoelTesi}J. Coelho, Abel maps for reducible curves, PhD Thesis, IMPA, Brasil (2007).
\bibitem[CP09]{bib:CoelPac}J. Coelho, M. Pacini, Abel maps for curves of compact type, J. Pure Appl. Algebra 214, No. 8, 1319--1333 (2010).
\bibitem[EGK00]{bib:EGK1}E. Esteves, M. Gagn\'{e}, S. Kleiman, Abel maps and presentation schemes, Comm.Algebra 28, 5961--5992 (2000).
\bibitem[EGK02]{bib:EGK2}E. Esteves, M. Gagn\'{e}, S. Kleiman, Autoduality of the compactified Jacobian, J. Lond. Math. Soc., II. Ser. 65, No. 3, 591--610 (2002).
\bibitem[EK05]{bib:EK}E. Esteves, S. Kleiman, The compactified Picard scheme of the compactified Jacobian, Adv.Math. 198, 484--503 (2005).
\bibitem[N10]{Ngo}B. C. Ng\^{o}, The Fundamental Lemma for Lie algebras. (Le Lemme Fondamental pour
les alg\'{e}bres de Lie.) (French) Publ. Math., Inst. Hautes \'{E}tud. Sci. 111, 1-271 (2010).

\end{thebibliography}
\end{document}